\setlist[enumerate]{leftmargin=2em}
\setlist[itemize]{leftmargin=2em}
\newcommand\define{\newcommand}
\newcommand{\Z}{\mathbb{Z}}
\newcommand{\R}{\mathbb{R}}
\newcommand{\frg}{\mathfrak{g}}
\newcommand{\frk}{\mathfrak{k}}
\newcommand{\Hom}{\mathrm{Hom}}
\newcommand{\Mod}{\mathrm{Mod}}
\newcommand{\Ext}{\mathrm{Ext}}
\newcommand{\End}{\mathrm{End}}
\newcommand{\ind}{\mathrm{ind}}
\newcommand{\lb}{{[\![}}
\newcommand{\rb}{{]\!]}}
\define\cA{\mathcal{A}}
\define{\Fitt}{\mathrm{Fitt}}
\define{\Ann}{\mathrm{Ann}}
\newtheorem{thm}{Theorem}[subsection] 
\newtheorem*{thm*}{Theorem}
\newtheorem{cor}[thm]{Corollary}
\newtheorem{prop}[thm]{Proposition}
\newtheorem{lem}[thm]{Lemma}
\newtheorem{ques}[thm]{Question}
\theoremstyle{definition}
\newtheorem{defn}[thm]{Definition}
\newtheorem{notn}[thm]{Notation}
\theoremstyle{remark}
\newtheorem{rem}[thm]{Remark}
\newcommand{\ra}{\rightarrow}
\newcommand{\lra}{\longrightarrow}
\newcommand{\lrisom}{\buildrel\sim\over\lra}
\newcommand{\risom}{\buildrel\sim\over\ra}
\newcommand{\rsurj}{\twoheadrightarrow}
\newcommand{\cH}{\mathcal{H}}
\newcommand{\B}{\mathrm{Bar}}
\newcommand{\op}{\mathrm{op}}
\let\c@equation\c@thm
\numberwithin{equation}{subsection}
\author{Carl Wang-Erickson}
\address{Department of Mathematics \\ University of Pittsburgh  \\ Pittsburgh, PA 15260, USA}
  \email{carl.wang-erickson@pitt.edu}
   \urladdr{https://sites.pitt.edu/\textasciitilde caw203/}
\begin{document}

\title[Higher products and Iwasawa algebras]{Higher Yoneda product structures and Iwasawa algebras modulo $p$}

\begin{abstract}
We give answers to three questions posed by Sorensen. These concern the relationship between the modulo $p$ Iwasawa algebra of a torsionfree pro-$p$ group and $A_\infty$-algebra structures on its Yoneda algebra. 
\end{abstract}

\maketitle

\tableofcontents

\section{Introduction}

Let $p$ be a prime number and let $k$ be a finite field of characteristic $p$. The subject of this paper is the $k$-linear representation theory of $p$-adic Lie groups $G$. This is of natural interest relative to the proposed $p$-adic local Langlands correspondence. For an introduction to this connection, see \cite{harris2016}. 

\subsection{Derived equivalences of Schneider and of Sorensen} 
\label{subsec: intro equivalences} 

As Schneider pointed out \cite{schneider2015}, the passage from $k$-linear smooth representations of $G$ over $k$ to modules for various Hecke algebras is not exact, losing some information. More specifically, we let $I \subset G$ denote a compact open subgroup that is torsionfree and pro-$p$ and define the Hecke algebra
\[
\cH_I := \End_{\Mod^\mathrm{sm}_k(G)}(\ind_I^G(k))^\op,
\]
so that the usual passage is
\[
H^0 : \Mod_k^\mathrm{sm}(G) \lra \Mod(\cH_I), \quad V \mapsto V^I. 
\]
Schneider proposed a derived framework that will not lose information. It is based on the (differential graded) dg-Hecke algebra 
\[
\cH_I^\bullet = \End_{\Mod^\mathrm{sm}_k(G)}(J^\bullet_I)^\op, 
\]
where $J^\bullet_I$ is an injective resolution of $\ind_I^G(k)$. Let $D(-)$ denote the formation of a derived category. Schneider showed that the passage 
\[
H^\bullet : D(\Mod_k^\mathrm{sm}(G)) \to D(\Mod_\mathrm{dg}(\cH_I^\bullet))
\]
is an equivalence of triangulated categories \cite[Thm.\ 9]{schneider2015}. This $H^\bullet$ is a natural derived enrichment of $H^0$, sending complexes $V^\bullet$ of injective objects of $\Mod_k^\mathrm{sm}(G)$ to $\Hom_{\Mod_k^\mathrm{sm}(G)}^\bullet(J_I^\bullet, V^\bullet)$. 

Applying results of Kadeishvili and Lef\`{e}vre-Hasegawa, Sorensen \cite{sorensen2020} exhibited a further equivalence from $D(\Mod_\mathrm{dg}(\cH_I^\bullet))$ to a category $D_\infty((\Omega^!, m))$ of $A_\infty$-modules for an $A_\infty$-algebra structure $m$ on the Yoneda algebra 
\[
\Omega^! := \Ext^*_{\Mod^\mathrm{sm}_k(G)}(\ind_I^G(k), \ind_I^G(k))
\]
that could be called the ``derived Hecke algebra,'' although the exposition of \cite{sorensen2020} emphasizes the case $G=I$. This equivalence is the result of passing from cochains and dg-algebras to the graded algebra of cohomology; for example, there is an isomorphism $H^*(\cH_I^\bullet) = \Omega^!$, but the canonical graded algebra structure on $\Omega^!$, known as the Yoneda algebra, loses information from $\cH_I^\bullet$ in general. Kadeishvili produced  an $A_\infty$-algebra structure $m$ on $\Omega^!$ along with a quasi-isomorphism of $A_\infty$-algebras between $\cH_I^\bullet$ and $(\Omega^!,m)$ \cite{kadeishvili1982}, and Lef\`{e}vre-Hasegawa produced an accompanying equivalence of derived categories of modules \cite{LH2003}. 

\subsection{Main result}
In this paper, we contribute to the equivalences Sorensen studied, ``bringing them full circle'' in a certain sense that we will now explain. Like Sorensen, we will emphasize the case that $G=I$, due the difficulty in computing $\Omega^!$ (merely as a graded $k$-vector space) in all but a handful of cases. We will let $\Omega := k\lb G\rb$ denote the Iwasawa algebra; then we may apply the equivalence produced by Pontryagin duality, 
\[
\Mod(\Omega)^\op \lrisom \Mod_k^\mathrm{sm}(G), 
\]
where $\Mod(\Omega)$ denotes the category of pseudocompact left $\Omega$-modules. The series of equivalences of derived categories that we want to ``bring full circle'' is
\begin{equation}
\label{eq: functor composition}
D(\Mod(\Omega)^\op) \risom D(\Mod_k^\mathrm{sm}(G)) \risom D(\Mod_\mathrm{dg}(\cH_G^\bullet)) \risom D_\infty((\Omega^!, m)),
\end{equation}
as seen in \cite[pg.\ 153]{sorensen2020}. 
 
Our main result is 
\begin{thm}[{Corollary \ref{cor: str transfer}}]
\label{thm: main}
Assume that the $p$-adic Lie group $G$ is pro-$p$. The Iwasawa algebra $\Omega = k\lb G \rb$ can be reconstructed, up to isomorphism, from $(\Omega^!,m)$ as the classical hull of the dual bar construction of its opposite $A_\infty$-algebra. Moreover, a choice of homotopy retract between $\cH_G^\bullet$ and $\Omega^!$ naturally pins down an isomorphism from $\Omega$ to this hull.
\end{thm}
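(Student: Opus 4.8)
The plan is to present the statement as bar--cobar duality applied twice, after using $G=I$ to replace the representation-theoretic data by the homological algebra of $\Omega=k\lb G\rb$ over its residue field $k$. Under the Pontryagin-duality equivalence $\Mod(\Omega)^\op\isoto\Mod_k^\mathrm{sm}(G)$, the compact induction $\ind_G^G(k)$ corresponds to the trivial pseudocompact $\Omega$-module $k$, so an injective resolution of $\ind_G^G(k)$ corresponds to a projective pseudocompact resolution $P_\bullet\onto k$ of $k$ over $\Omega$. The two order-reversals inside $\cH_G^\bullet$ --- one from its definition as an opposite endomorphism algebra and one from the contravariance of Pontryagin duality --- cancel, giving a quasi-isomorphism of dg-algebras $\cH_G^\bullet\simeq\REnd_\Omega(k)=\RHom_\Omega(k,k)$, whose cohomology is the Yoneda algebra $\Omega^!=\Ext^*_\Omega(k,k)$; choosing the injective resolution dual to the bar resolution of $k$ turns this into an isomorphism onto the continuous dual $(B\Omega)^\vee$ of the bar construction of the augmented pseudocompact algebra $\Omega$, with the convolution product. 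Homotopy transfer (Kadeishvili) along a chosen homotopy retract then upgrades $\Omega^!$ to $(\Omega^!,m)$ together with an explicit $A_\infty$-quasi-isomorphism $\cH_G^\bullet\simeq(\Omega^!,m)$. So $(\Omega^!,m)$, $\cH_G^\bullet$ and $(B\Omega)^\vee$ all lie in a single $A_\infty$-quasi-isomorphism class, and the task is to read $\Omega$ back off it.

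Now form the dual bar construction of the opposite $A_\infty$-algebra. Passing to the bar construction and then to the continuous dual preserves quasi-isomorphisms --- an $A_\infty$-quasi-isomorphism induces one of bar dg-coalgebras, and dualizing a quasi-isomorphism of complexes over the field $k$ again gives a quasi-isomorphism --- so
\[
\bigl(B((\Omega^!,m)^\op)\bigr)^\vee\ \simeq\ \bigl(B((\cH_G^\bullet)^\op)\bigr)^\vee\ \cong\ \bigl(B(((B\Omega)^\vee)^\op)\bigr)^\vee
\]
as pseudocompact dg-algebras. Using reflexivity of continuous/discrete duality for pseudocompact $k$-vector spaces and unwinding the right-hand side, it is the completed cobar construction $\mathrm{Cobar}(B\Omega)$ --- and the passage to the opposite is precisely what makes the convolution signs and the bar--cobar signs line up, so that one lands on $\Omega$ rather than $\Omega^\op$. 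Bar--cobar duality for the augmented pseudocompact algebra $\Omega$ (equivalently, the double-centralizer statement $\RHom_{\RHom_\Omega(k,k)}(k,k)\simeq\Omega$) then supplies a natural quasi-isomorphism $\mathrm{Cobar}(B\Omega)\simeq\Omega$; the hypotheses on $G$ enter here, since $\Omega$ is local with residue field $k$ and of finite global dimension $\dim G$, so that $k$ generates the pertinent derived category and the comparison holds. Consequently $\bigl(B((\Omega^!,m)^\op)\bigr)^\vee\simeq\Omega$, an object concentrated in cohomological degree $0$.

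On the other hand $\Omega^!$ is concentrated in non-negative cohomological degrees, so $B((\Omega^!,m)^\op)=T^c(\overline{\Omega^!}[1])$ is too, and its continuous dual is cohomologically connective; a cohomologically connective pseudocompact dg-algebra $\mathcal{A}$ has a well-defined classical hull, namely $H^0(\mathcal{A})$ (its universal classical-algebra quotient, which in this case is the completed free $k$-algebra on $(\Ext^1_\Omega(k,k))^\vee=\m/\m^2$ modulo the relations dictated by the higher products $m_n$). Combining this with the previous paragraph, $\bigl(B((\Omega^!,m)^\op)\bigr)^\vee$ has $H^0$ as its only non-vanishing cohomology and
\[
H^0\Bigl(\bigl(B((\Omega^!,m)^\op)\bigr)^\vee\Bigr)\ \cong\ \Omega ,
\]
which is the first assertion of the theorem.

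For the ``moreover'' clause, note that every arrow in the chain above is canonical once the homotopy retract between $\cH_G^\bullet$ and $\Omega^!$ has been fixed: the isomorphism $\cH_G^\bullet\cong(B\Omega)^\vee$, the bar--cobar unit $\Omega\isoto\mathrm{Cobar}(B\Omega)$, and the reflexivity isomorphisms are canonical, while the retract is the sole remaining choice, and it pins down both the $A_\infty$-quasi-isomorphism $\cH_G^\bullet\simeq(\Omega^!,m)$ and hence the dg-algebra quasi-isomorphism relating the two dual bar constructions on the nose. Applying $H^0$ to the composite then yields a definite isomorphism $\Omega\isoto H^0\bigl((B((\Omega^!,m)^\op))^\vee\bigr)$. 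I expect the main difficulty to be technical rather than conceptual: all of the above has to be carried out in the pseudocompact (pro-finite, completed) setting rather than for finite-dimensional algebras, which forces consistent use of completed tensor products and continuous duals, a verification that the completed bar resolution genuinely resolves $k$, and a check that bar--cobar duality --- equivalently $\mathrm{Cobar}(B\Omega)\simeq\Omega$ --- survives there; keeping the $A_\infty$ sign conventions and the placement of the opposite-algebra consistent is a secondary, more routine, matter of bookkeeping.
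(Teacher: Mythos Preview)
Your overall strategy is sound and is essentially the bar--cobar argument that the paper attributes to Positselski in \S\ref{subsec: related}. The paper takes a different route: it imports from \cite{A-inf} an explicit presentation (Theorem~\ref{thm: A-inf main}) of $\Omega$ as the classical hull of $\B^*(H^\bullet(\Omega,k),m')$, where $m'$ is transferred from the \emph{Hochschild cochain} dg-algebra $C^\bullet(\Omega,k)$, not from $\cH_G^\bullet$; the substance of \S\ref{sec: answers} is then to construct an explicit $A_\infty$-isomorphism $\Upsilon$ between $(H^\bullet(\Omega,k),m')$ and $((\Omega^!)^\op,m^\op)$ via Segal's comparison map (Proposition~\ref{prop: segal map}). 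Your argument sidesteps $\Upsilon$ by staying in the quasi-isomorphism class and invoking $\mathrm{Cobar}(B\Omega)\simeq\Omega$; that is conceptually cleaner, while the paper's approach yields the explicit formula for $\rho^u$ in Corollary~\ref{cor: Yoneda presentation}.

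There is, however, a genuine gap at the step ``choosing the injective resolution dual to the bar resolution \ldots\ turns this into an isomorphism onto $(B\Omega)^\vee$.'' This is false: with that choice one has $(\cH_G^\bullet)^\op\cong E^\bullet(\Omega,k)=\End_\Omega(P_\bullet)$, whose degree-$n$ piece is $\prod_i\Hom_\Omega(P_i,P_{i+n})$, strictly larger than $C^n(\Omega,k)=\Hom_k(\bar\Omega^{\otimes n},k)=(B\Omega)^\vee$ in degree $n$. The two dg-algebras are only quasi-isomorphic, and producing a specific dg-algebra quasi-isomorphism $C^\bullet(\Omega,k)\to E^\bullet(\Omega,k)$ with a chain-level left inverse is precisely the content of Segal's lemma that the paper isolates as its key technical input. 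For the first assertion of the theorem your argument survives this correction: insert the Segal quasi-isomorphism into your displayed chain and proceed. For the ``moreover'' clause it does not: you assert that ``the isomorphism $\cH_G^\bullet\cong(B\Omega)^\vee$ \ldots\ [is] canonical,'' and use this to conclude that a homotopy retract alone pins down the isomorphism. Since no such canonical isomorphism exists, one must also fix Segal's map $\Psi$ and its left inverse $\Phi$, and then show that a single homotopy retract on the $E^\bullet$ side induces a compatible one on the $C^\bullet$ side---which is exactly Lemma~\ref{lem: compat} and Corollary~\ref{cor: str transfer}. A secondary point: because $C^\bullet(\Omega,k)$ is quasi-isomorphic to $(\cH_G^\bullet)^\op$ rather than to $\cH_G^\bullet$, your displayed chain should read $(B((\cH_G^\bullet)^\op))^\vee\simeq(B((B\Omega)^\vee))^\vee=\mathrm{Cobar}(B\Omega)$ with no residual opposite, landing directly on $\Omega$; the hand-wave about ``the passage to the opposite \ldots\ makes the signs line up'' is then unnecessary.
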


The notions of \emph{dual bar construction} and \emph{classical hull} are explained in \S\ref{subsec: bar} and \S\ref{subsec: classical hull}, respectively. In brief, the dual bar construction translates the information of $(\Omega^!, m)$ into a complete dg-algebra augmented over $k$, and its classical is the universal (classical) augmented $k$-algebra receiving a map of $k$-augmented dg-algebras from the dual bar construction. 

We remark that the assumption of the theorem is looser than that of \cite{sorensen2020}, which also assumes that $G$ is torsionfree. The torsionfree assumption is needed to make the derived categories manageable, but it is not needed for this reconstruction of $\Omega$. 

When we restore the torsionfree assumption so that the whole chain of equivalences \eqref{eq: functor composition} is valid, Theorem \ref{thm: main} allows us to pass from the target of \eqref{eq: functor composition} back to its source in a particularly strong sense, reconstructing $\Omega$, and hence the abelian category $\Mod(\Omega)$, from $D_\infty((\Omega^!, m))$. Indeed, it is possible to reconstruct the $A_\infty$-algebra $(\Omega^!,m)$, up to isomorphism, from its module category $D_\infty((\Omega^!,m))$, according to \cite[Thm.\ 7.6.0.6]{LH2003} (cf.\ \cite[Thm.\ 3.1]{keller2006}).

\subsection{Positive answers to Sorensen's questions} 
This paper emphasizes the application of the main theorem to answer questions of Sorensen. Here are Sorensen's questions, reproduced verbatim modulo the following changes. 
\begin{itemize}
\item $K \subset G$ denotes a subgroup, instead of $H \subset G$
\item Following this paper's convention of writing $(\Omega^!, m)$ for an $A_\infty$-algebra structure on $\Omega^!$ that extends its inherent graded algebra structure, the notation $\Omega^!$ is left to denote the underlying dg-algebra (with trivial differential). 
\end{itemize}

Sorensen uses an explicit expression for the Yoneda graded algebra $\Omega^!$ as an exterior algebra on a vector space $\frg^*$, upon an additional assumption on $G$ that we explain in \S \ref{subsec: sorensen setting}, in order to express questions (b) and (c). 

\begin{ques}[{Sorensen \cite[\S12]{sorensen2020}}] 
\label{ques: main} 
Here are the questions.
\begin{enumerate}[label=(\alph*)]
\item \textit{By \cite[Thm.\ 1.1]{sorensen2020}, one can recover $\Omega = k\lb G\rb$ up to derived equivalence from the $A_\infty$-algebra
$(\Omega^!, m) = (\Ext^\bullet_\Omega(k, k)^\mathrm{op}, m)$. Does $(\Omega^!,m)$ determine $\Omega$ up to isomorphism?}
\item \textit{Is there a converse to \cite[Thm.\ 1.2]{sorensen2020} in the sense that $G$ must be abelian if the $A_\infty$-structure on $\bigwedge \frg^*$ is trivial?} 
\item \textit{Suppose $K \subset G$ is an open subgroup. Then $\Omega(G)$ is finite free over the subalgebra $\Omega(K)$ and we have the restriction map $\mathrm{Mod}(\Omega(G)) \ra \mathrm{Mod}(\Omega(K))$ which induces a map $D(\Omega(G)) \ra D(\Omega(K))$. Is there a morphism of $A_\infty$-algebras $(\bigwedge \frg^*,m_G) \ra (\bigwedge \frk^*,m_K)$ inducing the corresponding map on $D_\infty$ via ``extension of scalars'' along this map?}
\end{enumerate}
\end{ques}

Theorem \ref{thm: main} addresses question (a) directly and positively.  It can also be used to address the remaining questions. 
\begin{cor}
\label{cor: main}
Questions (a)-(c) of \ref{ques: main} all have positive answers. 
\end{cor}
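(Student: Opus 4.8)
The plan is to read (a) directly off Theorem~\ref{thm: main} and to reduce (b) and (c) to it by examining what the reconstruction recipe of that theorem --- pass to the opposite $A_\infty$-algebra, form its dual bar construction, take the classical hull --- does in two special situations. Part (a) is literally the assertion of Theorem~\ref{thm: main}, that $\Omega$ is recovered up to isomorphism from $(\Omega^!, m)$; here one needs only the bookkeeping check that the $A_\infty$-algebra $(\Omega^!, m)$ as normalized in this paper coincides with the datum $(\Ext^\bullet_\Omega(k,k)^{op}, m)$ appearing in Sorensen's question.

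For (b) I would run the reconstruction under the hypothesis that the $A_\infty$-structure is trivial, $m_i = 0$ for $i \neq 2$, so that its input is the bare graded algebra $\bigwedge \frg^*$. This algebra is graded-commutative, hence so is its opposite; the shuffle product then makes the bar construction a graded-commutative dg-coalgebra, the dual bar construction a graded-commutative dg-algebra, and the classical hull a commutative complete local $k$-algebra. By Theorem~\ref{thm: main} this hull is $\Omega = k\lb G\rb$, so $k\lb G\rb$ is commutative; since the natural map $G \to (k\lb G\rb)^\times$ is injective, $G$ is then abelian, which is the converse to \cite[Thm.\ 1.2]{sorensen2020} requested in (b). (We use implicitly that the hypothesis on $G$ under which the exterior-algebra description of $\Omega^!$ is valid is inherited by $K$ in part (c).)

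For (c) the restriction functor $D(\Omega(G)) \to D(\Omega(K))$ comes from the finite free ring extension $\Omega(K) \hookrightarrow \Omega(G)$, and I would lift it to the Yoneda side in two stages. First: a projective resolution $P^\bullet$ of $k$ over $\Omega(G)$ is, by finite freeness, also a projective resolution over $\Omega(K)$, yielding a dg-algebra map $\cH_G^\bullet = \End_{\Omega(G)}(P^\bullet) \hookrightarrow \End_{\Omega(K)}(P^\bullet) \simeq \cH_K^\bullet$; choosing homotopy retracts of $\cH_G^\bullet$ and $\cH_K^\bullet$ onto the respective Yoneda algebras --- the very data that, by Theorem~\ref{thm: main}, pins down the isomorphisms of the classical hulls with $\Omega(G)$ and $\Omega(K)$ --- I transfer this map, via the homotopy transfer theorem for $A_\infty$-morphisms, to an $A_\infty$-morphism $(\bigwedge\frg^*, m_G) \to (\bigwedge\frk^*, m_K)$ whose linear term realizes on cohomology the restriction map $H^\bullet_\mathrm{cont}(G,k) \to H^\bullet_\mathrm{cont}(K,k)$, i.e.\ the algebra map $\bigwedge\frg^* \to \bigwedge\frk^*$ dual to $\frk \hookrightarrow \frg$. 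Second: I would verify that the functor this $A_\infty$-morphism induces on $D_\infty$ by extension of scalars matches, along the chain \eqref{eq: functor composition}, the restriction functor on $D(\Omega(G))$; the finite freeness of $\Omega(G)/\Omega(K)$ is what makes this work, since via a Shapiro-type isomorphism it renders induction and coinduction along $\cH_G^\bullet \to \cH_K^\bullet$ equivalent, so that restriction of scalars on the group side becomes extension of scalars on the Yoneda side. A softer formulation of the same point: $\Omega(K) \hookrightarrow \Omega(G)$ is a map between the classical hulls produced by Theorem~\ref{thm: main}, and functoriality of the Koszul-dual construction promotes it to the sought $A_\infty$-morphism, which then intertwines the two module-category equivalences automatically.

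I expect the crux to be the second stage of (c): chasing the functors through \eqref{eq: functor composition} and its Pontryagin-dual endpoint precisely enough to match restriction of scalars with extension of scalars, and performing the homotopy transfer of $\cH_G^\bullet \to \cH_K^\bullet$ compatibly with the homotopy retracts of Theorem~\ref{thm: main} so that source and target are identified with $\Omega(G)$ and $\Omega(K)$ on the nose rather than merely up to automorphism.
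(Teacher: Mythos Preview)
The paper's own proof of this corollary is a three-line forward reference to Theorems~\ref{thm: a}, \ref{thm: b}, \ref{thm: c}, so you are really sketching proofs of those three theorems. Your treatment of (a) matches the paper exactly: it is Theorem~\ref{thm: main} (equivalently Corollary~\ref{cor: Yoneda presentation}).

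For (b) there is a genuine slip. The dual bar construction $\B^*(A,m) = (\hat T_k \Sigma A^*, m^*, s)$ in this paper carries the \emph{concatenation} product $s$ on the completed tensor algebra, which is never graded-commutative once $\dim_k A^1 > 1$; the shuffle structure dualises to a cocommutative \emph{coproduct}, not to a commutative product. What is true, and what you need, is the weaker statement that the \emph{classical hull} is commutative: when $m$ is trivial the only relations are $m_2^{\op*}(\Sigma(\Omega^!)^{2*})$, and because $m_2$ on $\bigwedge\frg^*$ is antisymmetric in degree one, the image of $m_2^{\op*}$ in $(\Sigma\frg)^{\otimes 2}$ is precisely the alternating tensors, so the hull is $k\lb x_1,\dotsc,x_d\rb$. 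That is exactly the paper's ``alternate proof'' of Theorem~\ref{thm: b}. The paper's primary proof is slicker and avoids this computation entirely: by Sorensen's direction $(\Omega(\Z_p^d)^!,m')$ is trivial, hence $A_\infty$-isomorphic to $(\Omega^!,m)$, and then Theorem~\ref{thm: a} gives $\Omega\cong\Omega(\Z_p^d)$.

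For (c) your plan is close in spirit to the paper's but routes through a different dg-model. You propose to transfer the inclusion $\End_{\Omega(G)}(P^\bullet)\hookrightarrow \End_{\Omega(K)}(P^\bullet)$; the paper instead works with the restriction map of Hochschild cochains $C^\bullet(G,k)\to C^\bullet(K,k)$, forms the composite $\eta_H = g_K'\circ(\mathrm{restr.})\circ f_G'$, and then conjugates by the explicit $A_\infty$-isomorphisms $\Upsilon_G,\Upsilon_K$ of Corollary~\ref{cor: str transfer} to land in $((\Omega^!)^\op,m^\op)$. The payoff of the paper's route is that the presentation isomorphisms of Theorem~\ref{thm: A-inf main} are formulated directly in terms of $f'$ and Hochschild cochains, so the compatibility check reduces to the observation that $f_K'\circ g_K'$ corresponds to an inner automorphism of $\Omega(K)$ (by \cite[Thm.~6.2.3]{A-inf}); the diagram~\eqref{eq: c} therefore commutes only up to inner automorphism, which is exactly the ambiguity you anticipated at the end of your sketch and is enough for the induced functor on module categories.
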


\begin{proof}
Theorem \ref{thm: a} provides a positive answer to question (a). Theorem \ref{thm: b} provides a positive answer to question (b). Theorem \ref{thm: c} provides a positive answer to question (c). 
\end{proof}

\subsection{Summary of methods} 
Context for Sorensen's work appears in \S\ref{sec: sorensen}. Next, background from \cite{A-inf} is reproduced in \S\ref{sec: recall}, which culminates in a variation of Theorem \ref{thm: main} that presents $\Omega$ in terms of an $A_\infty$-algebra structure $m'$ enriching the graded algebra structure on the opposite algebra $(\Omega^!)^\op$ (Theorem \ref{thm: A-inf main}). This $m'$ is different than the $A_\infty$-algebra structure $m$ on $\Omega^!$ that has been discussed up until this point; reconciling this difference is the main technical issue dealt with in this paper, which occupies \S\ref{sec: answers}. In order to apply the presentation of $\Omega$ in terms of the $A_\infty$-structure $m'$, the key technical ingredient, furnished by Corollary \ref{cor: str transfer}, is an \emph{explicit isomorphism} of $A_\infty$-algebras between $((\Omega^!)^\op,m^\op)$ and $((\Omega^!)^\op, m')$. The key idea is that while both $m^\op$ and $m'$ arise quite naturally from a single set of choices, they are far from identical; nonetheless, a result of Segal \cite{segal2008}, recorded as Proposition \ref{prop: segal map}, reconciles them. Once this is done, it is relatively straightforward to deduce positive answers to Question \ref{ques: main}.

\subsection{Related works} 
\label{subsec: related}

As discussed in \cite[\S4.5]{A-inf}, the fact that a choice of $A_\infty$-algebra structure $m^\op$ on $(\Omega^!)^\op$ determines a presentation for $\Omega$ was proved by Segal \cite[Thm.\ 2.14]{segal2008} in an analogous situation when $k$ has characteristic zero. (This was also proved in the case of a graded algebra in place of $\Omega$ in \cite{LPWZ2009}.) The extension to general characteristic is given in \cite[Part 2]{A-inf}. In addition, the amplification of \cite{segal2008} given in \cite[Cor.\ 6.2.6]{A-inf} especially clarifies the given answer to question (c). 

L.\ Positselski has previously answered these questions positively, in the sense that positive answers follow from the isomorphism \eqref{eq: Lazard theorem} and rather immediate consequences of his work. Namely, Positselski has studied equivalences of module categories that accompany bar-cobar equivalences of categories of dg-algebras and $A_\infty$-algebras, from which he claims that positive answers can be derived. 
\begin{itemize}
\item A positive answer to question (a) follows from \cite[\S6.10, part (b) of Theorem, pg.\ 76]{positselski2011}. It is also recorded as \cite[Thm.\ 3.3]{positselski2017}. 
\item A positive answer to question (b) may be found in \cite[end of Ex.\ 6.3, pg.\ 225]{positselski2017}. 
\item A positive answer to question (c) follows from \cite[\S6.9, part (a) of Proposition, pg.\ 74]{positselski2011}. 
\end{itemize}

\subsection{Conventions and definitions}
\label{subsec: conventions}

We work with complexes, graded algebras, dg-algebras, and $A_\infty$-algebras over a finite field $k$ of positive characteristic $p$. All gradings in the remainder of this paper are indexed by $\Z$, and the differentials have graded degree $+1$. 

\begin{rem}
The assumption that $p$ is odd is used in \cite[\S2]{sorensen2020} in order to relate the Yoneda algebra $\Omega^!$ to the $k$-Lie algebra $\frg$ via the isomorphism \eqref{eq: Lazard theorem}. This is required only in order to make sense of questions (b) and (c), so we do not make this assumption everywhere. 
\end{rem}

We let $\hat T_k V$ denote the free completed tensor algebra on a graded $k$-vector space $V$. We let $V^*$ denote the graded degree-wise $k$-linear dual of $V$, that is, $(V^*)^n = (V^{-n})^*$. This dual operation extends to complexes. 

We use $\Sigma$ to denote suspension of a (differential) graded $k$-vector space. This is mainly used to move elements of graded vector spaces from degree 1 to degree 0, so that we can consider algebras involving them as (classical) $k$-algebras (as opposed to graded $k$-algebras). 

\begin{rem}
The symbol $\Sigma V^*$ should be read as $(\Sigma V)^*$, first suspending and then applying the graded dual. Very concretely, $(\Sigma V^*)^n = (V^{1-n})^*$. 
\end{rem}

An $A_\infty$-algebra over $k$ is an algebra in graded $k$-vector spaces over the $A_\infty$-operad. In this article, we call these ``$A_\infty$-algebras,'' not mentioning $k$. See the article of Keller \cite{keller2001} for the full definition of the category of $A_\infty$-algebras, matching the convention we use here. Here, we give summary definitions. In particular, when $B, B'$ are graded $k$-vector spaces, we use $(B,m)$, where $m = (m_n)_{n \geq 1}$, to denote an $A_\infty$-algebra structure on $B$, i.e.
\[
m_n : B^{\otimes n} \lra B, \text{ for } n \geq 1, \text{ of graded degree } 2-n
\]
satisfying certain compatibility conditions. 
Likewise, $f = (f_n)_{n \geq 1} : (B,m) \to (B',m')$ denotes a morphism of $A_\infty$-algebras, where 
\[
f_n : B^{\otimes n} \lra B', \text{ for } n \geq 1, \text{ of graded degree } 1-n. 
\]
We also refer to terms describing $A_\infty$-algebras (minimal, formal) and $A_\infty$-morphisms (quasi-isomorphism, etc.) that can be found in \cite{keller2001}. We emphasize that an $A_\infty$-algebra $(B,m)$ is called \emph{minimal} when $m_1 = 0$. 

We will treat dg-algebras $(C, d_C, m_{2,C})$, where $d_C$ is the differential and $m_{2,C}$ is the multiplication, as $A_\infty$-algebras. The $A_\infty$-structure is $m = (m_n)_{n \geq 1}$ where $m_1 = d_C$, $m_2 = m_{2,C}$, and $m_n = 0$ for $n \geq 3$. In contrast, we say that an $A_\infty$-algebra structure $m$ \emph{enriches} a dg-algebra $(C,d_C,m_{2,C})$ when $m_1 = d_C$ and $m_2 = m_{2,C}$; for enrichments $m$, there is no restriction on $m_n$ for $n \geq 3$. We remark that enrichments of graded algebras, considered to be a dg-algebra with a trivial differential, are minimal by definition. For example, the $A_\infty$-algebra enrichments of the Yoneda algebra $\Omega^!$ discussed earlier in this introduction are minimal. 

The works of Keller \cite{keller2001, keller2002, keller2006} are useful introductions to $A_\infty$-algebras in relation to representations of algebras, with respect to the perspective and conventions of this paper.

\section{Sorensen's results}
\label{sec: sorensen}

\subsection{The setting of \cite{sorensen2020}}
\label{subsec: sorensen setting}

We will use some common terminology about $p$-adic Lie groups without giving definitions here, referring the reader to \cite{sorensen2020}, where they are clearly explained. Schneider's book \cite{schneider2011}  is a thorough exposition of this background material. 

Let $G$ be a $p$-adic Lie group that is torsionfree and pro-$p$. Let $k$ be a finite field of characteristic $p$. Let $\Omega = k\lb G\rb$ be the completed group algebra, the \textit{Iwasawa algebra} of $G$, which is a local associative $k$-algebra equipped with its standard profinite topology. Let $D(\Omega)$ denote the derived category of the category $\mathrm{Mod}(\Omega)$ of pseudocompact left $\Omega$-modules, which, as Sorensen explains \cite[\S3]{sorensen2020}, is anti-equivalent to the category of smooth $k$-linear representations of $G$. 
\begin{defn}
\label{defn: Omega shriek}
Let $\Omega^!$ denote the opposite algebra of the Yoneda algebra of $\Omega$, recalling that the Yoneda algebra in the category $\mathrm{Mod}(\Omega)$,
\[
(\Omega^!)^\op = \Ext^\bullet_\Omega(k,k) := {\bigoplus}_{i \in \Z_{\geq 0}} \Ext^i_\Omega(k,k),
\]
is a canonical $\Z$-graded $k$-algebra under the cup product.  We call $\Omega^!$ the \emph{Koszul dual} $k$-algebra of $\Omega$; it plays the role of the derived Hecke algebra, for reasons explained in \cite[\S1]{sorensen2020}. 
\end{defn}

For concreteness, and in order to recall the full scope of Sorensen's results, we set up a narrower class of groups $G$ where the structure of $\Omega^!$ is well-understood (see especially \cite[\S\S7-8]{sorensen2020} and \cite{schneider2011} for reference). When $G$ is equipped with a \emph{valuation}, there arises a graded \emph{$k$-Lie algebra of $G$} that we denote by $\frg$ (see e.g.\ \cite[\S\S23-25]{schneider2011}). When there is a \emph{basis} for $G$ whose elements have the same valuation $t \in \R_{>1/(p-1)}$, $G$ is called \emph{equi-$p$-valued} and $\frg$ is concentrated in degree $t$; in particular, $\frg$ is abelian. Sorensen especially focuses on the case where $G$ is a \emph{uniform} pro-$p$ group, which implies that $p$ is odd, that $G$ is equi-$p$-valued, and that the valuation can be chosen so that $\frg$ is concentrated in degree 1. 

Combining a theorem of Lazard \cite{lazard1965} in the equi-$p$-valued case with consequences of the straightforward nature of $\frg$ in the uniform case, one has a canonical $\Z$-graded $k$-algebra isomorphism of \cite[Cor.\ 8.3]{sorensen2020},
\begin{equation}
\label{eq: Lazard theorem}
\Omega^! \lrisom {\bigwedge}_k \frg^* := \bigoplus_{i \in \Z_{\geq 0}} \wedge_k^i (\frg^*), 
\end{equation}
where $(-)^*$ denotes $k$-linear duality. A particular consequence of \eqref{eq: Lazard theorem} is that $\dim_k \Ext^i_\Omega(k,k) = \binom{\dim_k \frg}{i}$ for integers $i, 0 \leq i \leq \dim_k \frg$. 

This discussion makes it clear that the passage $\Omega \mapsto \Omega^!$ loses information: there are equal-dimensional uniform pro-$p$ groups that are not isomorphic, and thus their Iwasawa algebras are not isomorphic. Yet the $k$-Lie algebras of uniform pro-$p$ groups are determined up to isomorphism by their dimension alone \cite[\S7.1]{sorensen2020}. 

\subsection{The results of \cite{sorensen2020}}  
Sorensen proves that there exists an \emph{$A_\infty$-algebra} structure enriching the graded algebra $\Omega^!$ that recovers the lost information, in the following sense. We denote such a structure by $m$, and write $(\Omega^!, m)$ for the resulting $A_\infty$-algebra. For an introduction to $A_\infty$-algebras, see the references given in \S\ref{subsec: conventions}, where the notion of ``enrichment'' is also discussed. 

We emphasize that $m$ is unique up to non-unique isomorphism, which is typical for $A_\infty$-algebra structures. That is, whilst $m$ is not canonical, the isomorphism class of $(\Omega^!,m)$ is canonical. In particular, $m$ is called \emph{trivial} when it carries no more information than $\Omega^!$; triviality of $(\Omega^!,m)$ is well-defined up to isomorphism. 

There is a derived category of strictly unital left $A_\infty$-modules of $(\Omega^!, m)$, denoted $D_\infty(\Omega^!, m)$. The main result of \cite{sorensen2020} is that there are the following equivalences of triangulated categories via the composition \eqref{eq: functor composition}, 
\[
\tag{\cite[Thm.\ 1.1]{sorensen2020}} D(\Omega) \lrisom D_\infty(\Omega^!, m).  \qquad 
\]
And when $G$ is a uniform pro-$p$ group, this can be rephrased as 
\[
\tag{\cite[Thm.\ 1.2]{sorensen2020}} D(\Omega) \lrisom D_\infty(\bigwedge \frg^*, m).  \qquad 
\]

\section{The reconstruction theorem from \cite{A-inf}}
\label{sec: recall}

In this section, our goal is to state an application of \cite[Cor.\ 6.2.6]{A-inf} to the Iwasawa algebra $\Omega$, which is recorded here as Theorem \ref{thm: A-inf main}. We first recall background that is presented at greater length in \cite[\S5]{A-inf}. 

\subsection{Hochschild cohomology}

Firstly we recall a continuous version of the standard \emph{Hochschild cochain complex} 
\[
C^\bullet(\Omega, k) := \bigoplus_{i \in \Z_{\geq 0}} C^i(\Omega,k) := \bigoplus_{i \in \Z_{\geq 0}} \Hom_k(\Omega^{\otimes i}, k)
\]
of $\Omega$, where the $(\Omega, \Omega)$-bimodule structure on $k$ is trivial and where $\Hom_k(\Omega^{\otimes i}, k)$ consists of those $k$-linear maps that are continuous according to the natural profinite topologies. This is naturally a dg-algebra, where the multiplication comes from the multiplication operation on $k$ and the standard cup product of Hochschild cochains. In what follows, we presume continuity of all Hochschild cochains and pass over topological conditions in silence. 

Likewise, denote the graded $k$-algebra of cohomology of the Hochschild cochain complex, which we will call \emph{Hochschild cohomology}, by $H^\bullet(\Omega, k)$. The Hochschild cohomology $H^\bullet(\Omega, k)$ is canonically isomorphic to the Yoneda algebra $\Ext^\bullet_\Omega(k,k)$, as a particular case of the standard result that, for left $\Omega$-modules that are finite-dimensional over $k$, there is a canonical isomorphism $HH^\bullet(A, \Hom_k(M,N)) \cong \Ext_A^\bullet(M,N)$ \cite[Lem.\ 8.4.2]{witherspoon2019}. In the sequel we will freely use all of the canonical isomorphisms
\begin{equation}
\label{eq: Yoneda algebra} 
(\Omega^!)^\op \cong \Ext^\bullet_\Omega(k,k) \cong H^\bullet(\Omega, k) 
\end{equation}

\subsection{Minimal models for dg-algebras}
\label{subsec: MM}

Let $(C, d_C, m_{2,C})$ be a dg-$k$-algebra with graded cohomology algebra $H = (H^\bullet(C), 0, m_2)$. It is a result of Kadeishvili \cite{kadeishvili1982}, which may also be found recorded \cite[\S2.2]{A-inf}, that there exists an $A_\infty$-algebra structure $m = (m_n)_{n \geq 1}$ on the cohomology of a dg-algebra such that
\begin{itemize}
\item it enriches the graded algebra structure on $H$, in the sense that 
\[
m_1 = 0 \quad \text{ and } \quad m_2 \equiv m_{2,C} \pmod{B^\bullet(C)} 
\]
where $B^\bullet(C)$ represents the graded vector space of coboundaries in $C$. 
\item there exists a quasi-isomorphism of $A_\infty$-algebras 
\[
f = (f_n)_{n \geq 1} : H \ra C
\]
where $f_1$ sends each cohomology class to a choice of representative cocycle. In order to interpret this map in the $A_\infty$ category, recall that dg-algebras may be taken to be $A_\infty$-algebras with trivial higher multiplications, as discussed in \S\ref{subsec: conventions}. 
\end{itemize}
This data $(m,f)$ is unique up to non-unique isomorphism. 

Because an $A_\infty$-algebra $(A,m)$ is called minimal when $m_1 = 0$, we call a $(H,m)$ produced by Kadeishvili a \emph{minimal model} of $(C, d_C, m_{2,C})$, as $f : (H,m) \ra (C, d_C, m_{2,C})$ is a quasi-isomorphism. We call such $(m,f)$ a \emph{minimal model structure} of $H$ relative to $(C, d_C, m_{2,C})$. 

Subsequent work of Kontsevich--Soibelman \cite{KS2000} established the existence of minimal models for $A_\infty$-algebras and clarified that a homotopy retract structure on $(C,d_C)$ relative to $(H,0)$ gives rise to a choice of $(m,f)$ producing the minimal model. 
\begin{defn}
\label{defn: homotopy retract}
Let $(A,d_A)$, $(C,d_C)$ be complexes. We call $(A,d_A)$ a \emph{homotopy retract} of $(C,d_C)$ when they are equipped with maps
\[
\xymatrix{
C \ar@(dl,ul)^h \ar@<1ex>[r]^p & A \ar@<1ex>[l]^i
}
\]
such that $p$ and $i$ are morphisms of complexes, $h : C \ra C[1]$ is a morphism of graded vector spaces, $\mathrm{id}_C - ip = d_Ch + hd_C$, and $i$ is a quasi-isomorphism. 
\end{defn}

\begin{prop}[{Kontsevich--Soibelman \cite{KS2000}}]
\label{prop: KS}
Let $(C,m')$ be an $A_\infty$-algebra. A homotopy retract $(i,p,h)$ between $(H^\bullet(C), 0)$ and $(C, m'_1)$ induces, via explicit formulas, a minimal model structure $(f,m)$. That is, there are formulas in $(i,p,h)$ and $m'$ that produce the minimal $A_\infty$-algebra structure $m$ on $H^\bullet(C)$ and the quasi-isomorphism $f : (H^\bullet(C), m) \to (C,m')$. 
\end{prop}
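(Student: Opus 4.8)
The plan is to prove this as an instance of the homotopy transfer theorem for $A_\infty$-algebras: lift the retract data $(i,p,h)$ to the bar constructions and invoke the homological perturbation lemma, which simultaneously produces the advertised explicit formulas as sums over planar trees.

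First I would normalize the retract. Since $A = (H^\bullet(C),0)$ carries the zero differential and $i$ sends a cohomology class to a representative cocycle, one has $pi = \mathrm{id}_{H^\bullet(C)}$ automatically; and by a standard modification of $h$ (well known in the literature on the perturbation lemma) one may further assume the side conditions $h^2 = 0$, $hi = 0$ and $ph = 0$ without changing $i$, $p$ or the homotopy class of $h$. These identities are what force the transferred formulas to satisfy the $A_\infty$-relations exactly. Next I would pass to tensor coalgebras: recall from \cite{keller2001} that an $A_\infty$-structure on a graded space $B$ is the same datum as a square-zero, degree $+1$ coderivation of the reduced tensor coalgebra $T^c(\Sigma B)$, and that $A_\infty$-morphisms correspond to morphisms of dg-coalgebras. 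Thus $m'$ is encoded by a codifferential $D = \partial + \delta$ on $T^c(\Sigma C)$, where the linear part $\partial$ is induced by $m'_1$ and $\delta$ by the higher products $(m'_n)_{n\ge 2}$; each component of $\delta$ strictly lowers tensor length. The ``tensor trick'' lifts $i$, $p$, $h$ to maps $I$, $P$, $H$ between $T^c(\Sigma H^\bullet(C))$ and $T^c(\Sigma C)$ that realize the former, with zero differential, as a homotopy retract of $(T^c(\Sigma C), \partial)$, and $H$ preserves tensor length.

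Because the reduced tensor coalgebra is conilpotent, $\delta$ lowers length and $H$ preserves it, the operator $\delta H$ is locally nilpotent, so $(\mathrm{id}-\delta H)^{-1} = \sum_{j\ge 0}(\delta H)^j$ is a finite sum on each graded piece and the homological perturbation lemma applies to the perturbation $\delta$ of $\partial$. It outputs a codifferential on $T^c(\Sigma H^\bullet(C))$ --- equivalently a minimal $A_\infty$-structure $m$ on $H^\bullet(C)$, minimal because the differential induced on cohomology vanishes, so $m_1 = 0$ --- together with a perturbed dg-coalgebra morphism, i.e.\ an $A_\infty$-morphism $f$ with $f_1 = i$, which is therefore a quasi-isomorphism. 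Reading the geometric series off degree by degree expresses $m_n$ (resp.\ $f_n$) as a sum over planar rooted trees with $n$ leaves in which every internal vertex has at least two incoming edges, with the leaves labelled by $i$, the internal edges by $h$, the internal vertices by the operations $m'_k$, and the root by $p$ (resp.\ by $h$; the exceptional one-leaf tree gives $f_1 = i$). For each fixed $n$ there are only finitely many such trees, so no convergence issue arises at the level of each $m_n$. When $(C,m')$ is an honest dg-algebra, i.e.\ $m'_n = 0$ for $n\ge 3$, only the single-vertex tree contributes to $m_2$, giving $m_2 = p\circ m_{2,C}\circ(i\otimes i)$, which represents $m_{2,C}$ modulo coboundaries; together with $f_1 = i$ sending classes to representative cocycles, this is precisely a minimal model structure in the sense of \S\ref{subsec: MM}.

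I expect the main obstacle to be purely bookkeeping: matching the abstract output of the perturbation lemma with the stated tree sums, keeping track of the Koszul signs and of the suspensions $\Sigma$, and confirming that the normalization of $h$ is compatible with the expansion. If one prefers to bypass the perturbation-lemma black box, the alternative is to define $m$ and $f$ by the tree formulas outright and verify the $A_\infty$-relations by induction on $n$: in the defining identity one isolates the internal edge carrying $\mathrm{id}_C - ip = m'_1 h + h m'_1$, after which the terms telescope, the side conditions $h^2 = hi = ph = 0$ and $pi = \mathrm{id}$ annihilating the remaining contributions. That induction is the delicate point either way.
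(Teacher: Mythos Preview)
Your proposal is correct and is precisely the standard argument the paper defers to: the paper does not actually prove this proposition but simply cites \cite[Thm.\ 9.4.14]{LV2012} and \cite[Thm.\ 5.2.5]{A-inf}, both of which record the homotopy transfer theorem via the homological perturbation lemma on the bar construction together with the resulting tree formulas --- exactly the route you sketched. One small point of care: the paper's Definition~\ref{defn: homotopy retract} does not assume $H^\bullet(i)=\mathrm{id}$, so your assertion that ``$i$ sends a cohomology class to a representative cocycle'' (i.e.\ of the \emph{same} class) is not automatic; it follows after the harmless normalization $i\mapsto i\circ H^\bullet(i)^{-1}$, which is implicit in the Merkulov decomposition the paper describes in the remark following the proposition.
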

\begin{proof}
See \cite[Thm.\ 9.4.14]{LV2012} or \cite[Thm.\ 5.2.5]{A-inf}; both of these references record the formulas. 
\end{proof}
Applying this to the case where $(C,m')$ is a dg-algebra (i.e.\ $m'_n = 0$ for $n \geq 3$) implies Kadeishvili's result on $A_\infty$-algebra minimal models for dg-algebras. 

\begin{rem}
Merkulov set up the same formulas in a more concrete way \cite{merkulov1999}, which the author learned from work of Lu--Palmieri--Wu--Zhang \cite{LPWZ2009}. These formulas may be found in \cite[Ex.\ 5.2.8]{A-inf}, and we give some information here for the reader's convenience. 

Following Merkulov, we note that a homotopy retract between the cohomology $(H,0)$ and the complex it arose from, $(C,d_C)$, amounts to a direct sum decomposition
\begin{equation}
\label{eq: C-decomp}
C^n = B^n \oplus \tilde H^n \oplus L^n \text{ for all } n \geq 0,
\end{equation}
where $B^n$ denotes the subspace of $C^n$ consisting of $n$-coboundaries, $\tilde H^n$ is a complement to $B^n$ in the subspace $Z^n$ of $C^n$ consisting of $n$-cocycles, and $L^n$ is a complement to $Z^n$ in $C^n$. Then $f_1$ in degree $n$ is a map $H^n \ra C^n$ lifting each cohomology class to a choice of representing cocycle. This is specified by the decomposition above as follows: $f_1$ is the inverse of the natural isomorphism $\tilde H^n \risom H^n$. Similarly, $f = (f_n)_{n \geq 1}$ and $m = (m_n)_{n \geq 1}$ are given inductively by formulas in $C^n$ using the decomposition above and the isomorphism $f_1 : H^n \risom \tilde H^n$. 
\end{rem}

It will also be useful to have an inverse quasi-isomorphism to the $f$ of the minimal model structure. 
\begin{prop}
\label{prop: f-inverse}
Let $(C, m')$ and $(i,p,h)$ as in Proposition \ref{prop: KS}, so that we have the minimal model structure $(f,m)$ described there. Then $p$ extends to a quasi-isomorphism of $A_\infty$-algebras, in the following sense: there exists a quasi-isomorphism $g = (g_n)_{n \geq 1}: (C, m') \ra (H^\bullet(C), m)$ such that $g_1 = p$. Moreover, $g$ is a left inverse to $f$, in that $g \circ f : (H^\bullet(C), m) \ra (H^\bullet(C), m)$ is the identity map. That is, $g \circ f$ is an $A_\infty$-isomorphism, where $(g \circ f)_1$ is the identity map $\mathrm{id}_{H^\bullet(C)}$ and $(g \circ f)_n = 0$ for $n \geq 2$. 
\end{prop}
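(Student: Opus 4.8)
The plan is to prove Proposition~\ref{prop: f-inverse} by combining Proposition~\ref{prop: KS} with the general principle that minimal models are ``homotopy-universal,'' upgrading the fact that $p \circ i = \mathrm{id}_H$ (which holds strictly, not just up to homotopy) from the level of complexes to the level of $A_\infty$-algebras. Concretely, I would first invoke Proposition~\ref{prop: KS} to produce, from the homotopy retract $(i,p,h)$, not just the $A_\infty$-structure $m$ on $H = H^\bullet(C)$ and the $A_\infty$-quasi-isomorphism $f\colon (H,m)\to(C,m')$ with $f_1 = i$, but also the accompanying ``projection'' $A_\infty$-quasi-isomorphism $g\colon (C,m')\to(H,m)$ with $g_1 = p$. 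This $g$ is part of the standard homotopy-transfer package (see e.g.\ \cite[Thm.\ 9.4.14]{LV2012} or \cite[Thm.\ 5.2.5]{A-inf}): the same tree-summation formulas that define $f_n$ and $m_n$ also define components $g_n\colon C^{\otimes n}\to H$ of graded degree $1-n$, and the homotopy-transfer theorem asserts these assemble into an $A_\infty$-morphism. So the existence of $g$ extending $p$ requires only citing the form of the transfer theorem that outputs the quasi-isomorphism in the ``wrong'' direction.

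The remaining content is the claim that $g\circ f = \mathrm{id}_{(H,m)}$ on the nose, i.e.\ $(g\circ f)_1 = \mathrm{id}_H$ and $(g\circ f)_n = 0$ for $n\ge 2$. For $n = 1$ this is immediate: $(g\circ f)_1 = g_1\circ f_1 = p\circ i = \mathrm{id}_H$, using the homotopy-retract identity and the fact that $i$ is a section of $p$ (this is forced by $\mathrm{id}_C - ip = d_C h + h d_C$ together with $i$ being a quasi-isomorphism onto $H$, or is simply part of the data in the decomposition \eqref{eq: C-decomp}, where $i = f_1$ identifies $H^n$ with the summand $\tilde H^n$ and $p$ is the projection back). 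For $n\ge 2$ I would argue by the explicit tree formulas: each term of $(g\circ f)_n$ is a sum over trees built from the $m'_k$'s, with the homotopy $h$ inserted at internal edges and $p$, $i$ at the leaves/root. Because $p\circ i = \mathrm{id}$ and, crucially, $h\circ i = 0$ and $p\circ h = 0$ (the standard side conditions one may assume for the homotopy retract — or which can be arranged without loss of generality via the usual modification of $h$, as in \cite[\S5]{A-inf}), every tree with at least one internal vertex contributes zero, leaving only the $n = 1$ term. Hence $g\circ f$ is the strict identity $A_\infty$-morphism.

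The main obstacle, and the only real subtlety, is the bookkeeping needed to justify that $h\circ i = 0$ and $p\circ h = 0$ may be assumed. These ``annihilation'' or ``side'' conditions are not part of Definition~\ref{defn: homotopy retract} as stated, so I would either (a) note that the homotopy retracts arising from the decomposition \eqref{eq: C-decomp} automatically satisfy them — since there $h$ is defined to vanish on $\tilde H^n \oplus L^n$ and to land in $L^n$, whence $h i = 0$ because $i$ has image $\tilde H^n$, and $p h = 0$ because $h$ has image in $L^n \subset \ker p$ — or (b) cite the standard lemma that any homotopy retract can be replaced by one satisfying the side conditions without changing $i$, $p$, or the resulting $(m,f,g)$ up to the relevant isomorphism. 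Either way, once the side conditions are in hand the vanishing of the higher components of $g\circ f$ is a direct inspection of the transfer formulas rather than a computation, so I would present it as such and refer to \cite[\S5.2]{A-inf} or \cite[Ch.\ 9]{LV2012} for the formulas themselves.
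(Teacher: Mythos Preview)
Your approach is genuinely different from the paper's: the paper simply cites \cite[Thm.\ 3.9(2)]{CL2019} and moves on, whereas you sketch a direct proof via the explicit tree formulas of homotopy transfer together with the side conditions $p\circ i=\mathrm{id}$, $h\circ i=0$, $p\circ h=0$. This direct route is valid and is essentially how the cited result is proved, so in that sense you are unpacking what the paper leaves as a black box. A couple of points where your sketch should be tightened: first, the references you invoke for the ``projection'' morphism $g$ (\cite[Thm.\ 9.4.14]{LV2012}, \cite[Thm.\ 5.2.5]{A-inf}) in their usual formulations produce the transferred structure $m$ and the quasi-isomorphism $f$ extending $i$, but do not always package the $A_\infty$-extension of $p$ in the same statement --- you should point to a source that actually records the tree formulas for $g_n$, or derive them. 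Second, your explanation of why $(g\circ f)_n=0$ for $n\ge 2$ is organised a bit loosely: the composite is a sum over ordered partitions $n=i_1+\cdots+i_r$ of $g_r\circ(f_{i_1}\otimes\cdots\otimes f_{i_r})$, and the vanishing uses that each $f_{i_j}$ lands in $\mathrm{im}(i)$ (when $i_j=1$) or $\mathrm{im}(h)$ (when $i_j\ge 2$, since the root edge of each $f$-tree carries $h$), while the leaf edges in the $g_r$-trees annihilate both via $h\circ i=0$ and $h\circ h=0$; you should add $h^2=0$ to your list of side conditions (it too holds automatically for the decomposition \eqref{eq: C-decomp}). With those adjustments your argument goes through; the paper's citation buys brevity, your argument buys transparency about exactly which side conditions are doing the work.
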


\begin{proof}
This follows from \cite[Thm.\ 3.9(2)]{CL2019}. 
\end{proof}

\subsection{The bar equivalence} 
\label{subsec: bar}

We recall a dualized version of the bar equivalence, which is described at more length in \cite[\S2.1]{A-inf}. 

Let $(A,m)$ be an $A_\infty$-algebra. Taking the suspension of the graded dual of $m_n : A^{\otimes n} \to A$ as described in \S\ref{subsec: conventions}, we get
\[
m_n^*: \Sigma A^* \lra (\Sigma A^*)^{\otimes n}, \text{ of graded degree } 1. 
\]
Taking the product over the codomain, we produce 
\begin{equation}
\label{eq: assemble m*}
m^*  = \prod_{n \geq 1} m_n^* : \Sigma A^* \lra \hat T_k \Sigma A^*. 
\end{equation}
By applying the Leibniz rule, we uniquely extend this map to a derivation
\[
m^* : \hat T_k \Sigma A^* \lra \hat T_k \Sigma A^*. 
\]

Note that nothing in the construction of $m^*$ depends on $m$ satisfying the compatibility conditions demanded of an $A_\infty$-algebra structure on $A$. In fact, $m$ gives an $A_\infty$-algebra structure if and only if the derivation $m^*$ is a differential, i.e.\ $(m^*)^2 = 0$. This is a consequence of the \emph{bar equivalence}, which is an isomorphism (not merely an equivalence) of categories between $A_\infty$-algebras and co-free co-complete co-dg-algebras \cite[pg.\ 7]{proute2011} (see also \cite[\S9.2.1]{LV2012}). The above ``dualized version'' of the bar equivalence restricts to an equivalence on those $A_\infty$-algebras $A$ such that $A^n$ is finite-dimensional for all $n \in \Z$. 

Thus, when $(A,m)$ is an $A_\infty$-algebra, we write 
\[
\B^*(A,m) := (\hat T_k \Sigma A^*, m^*, s)
\]
for the complete dg-algebra given by the differential $m^*$ and the standard multiplication $s$ of $\hat T \Sigma A^*$. In words, we call this the \emph{dual bar construction of $(A,m)$}. 

\subsection{The classical hull}
\label{subsec: classical hull}

There is a natural inclusion functor from $k$-algebras to dg-$k$-algebras, sending a $k$-algebra $D$ to a dg-$k$-algebra $D[0]$ concentrated in degree zero and with a trivial differential. This functor has a left adjoint on dg-$k$-algebras. This functor sends a dg-$k$-algebra $(B, d_B, m_{2,B})$ to its quotient $\cA(B) = \cA(B, d_B, m_{2,B})$ by the ideal generated by 
\[
\bigoplus_{n \in \Z\smallsetminus\{0\}} B^n \text{ and } d_B(B^{-1}),
\]
which we call the \emph{classical hull} of $B$, following \cite[\S2.3]{A-inf}. These generators reflect that $D[0]$ is concentrated in degree $0$ and that $D[0]$ has a trivial differential, respectively. Note that $d_B(B^{-1}) \subset B^0$, because $d_B$ has degree $1$. 

We are especially interested in the case of the dg-algebra $B = \B^*(A,m)$. Because its underlying complete graded algebra is freely generated by $\Sigma A^*$, one may readily compute as in \cite[Ex.\ 2.3.2]{A-inf} that the classical hull is presented as  
\[
\cA(B) = \frac{\hat T_k (\Sigma (A^1))^*}{(m^*((\Sigma (A^2))^*))}. 
\]
Indeed, the degree $-1$ part of the dual bar construction is $\B^*(A,m)^{-1} = (\Sigma (H^2))^*$, while its degree $0$ part is $(\Sigma(H^1))^*$. The projection from $\hat T_k \Sigma H^*$ to $\hat T^k (\Sigma (H^1))^*$ of the image of $m^*(\Sigma (H^2))^*$ can be constructed as in \eqref{eq: assemble m*} from the suspended linear duals of the $A_\infty$-product maps restricted to tensor powers of $H^1$, $m_n : (H^1)^{\otimes n} \to H^2$. 

\subsection{A result from \cite{A-inf}}

Recall from the introduction that $\Omega$ is the Iwasawa algebra of $G$ over $k$ and $\Omega^!$ is the opposite algebra of the Yoneda algebra $\Ext^\bullet_\Omega(k,k)$. The main result that we wish to recall from \cite[\S6]{A-inf} gives a presentation of $\Omega$ in terms of a choice of decomposition of $C^\bullet(\Omega, k)$ as in \eqref{eq: C-decomp}. We state it in terms of its application to $\Omega$. 

\begin{thm}
\label{thm: A-inf main}
Choose a homotopy retract structure on $(H^\bullet(\Omega,k), 0)$ relative to $(C^\bullet(\Omega,k), d_C)$, or, equivalently, a decomposition of $C^\bullet(\Omega,k)$ as in \eqref{eq: C-decomp}. This determines the additional data $(f,m)$ as explained in \S\ref{subsec: MM}.  These data 
determine an isomorphism 
\[
\rho^u: \Omega \lrisom \cA(\B^*(H^\bullet(\Omega,k))) \cong \frac{\hat T_k \Sigma H^1(\Omega,k)^*}{(m^*(\Sigma H^2(\Omega,k)^*)}
\]
given by, for $x \in \Omega$, 
\[
\rho^u: x \mapsto \bar x + \sum_{i =1}^\infty (\underline{e} \mapsto (f_i(\underline{e}))(x)) ,
\]
where $\underline{e}$ is a generic element of $(\Sigma H^1(\Omega,k))^{\otimes i}$ and $x \mapsto \bar x$ denotes reduction modulo the unique maximal ideal of $\Omega$. 
\end{thm}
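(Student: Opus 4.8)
The plan is to deduce Theorem~\ref{thm: A-inf main} as the specialization of \cite[Cor.\ 6.2.6]{A-inf} to the case where the input dg-algebra is the Hochschild cochain complex $C^\bullet(\Omega,k)$. First I would recall that $C^\bullet(\Omega,k)$ is an (augmented, pseudocompact) dg-$k$-algebra whose cohomology is the Yoneda algebra $H^\bullet(\Omega,k) \cong (\Omega^!)^\op$, via \eqref{eq: Yoneda algebra}. A choice of decomposition \eqref{eq: C-decomp} of $C^\bullet(\Omega,k)$ is exactly a homotopy retract structure $(i,p,h)$ of $(H^\bullet(\Omega,k),0)$ relative to $(C^\bullet(\Omega,k),d_C)$; by Proposition~\ref{prop: KS} (Kontsevich--Soibelman/Merkulov) this produces the minimal model structure $(f,m)$, with $f_1$ lifting each cohomology class to its chosen cocycle representative. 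This is the data appearing in the statement.

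Next I would invoke the reconstruction result of \cite{A-inf}: for a complete augmented dg-algebra $C$ with cohomology $H$ and minimal model $f\colon (H,m)\to C$, there is a natural isomorphism from the ``predual'' algebra $\cA(\B^*(H,m))$ back to the degree-zero part of the relevant completed object, and moreover the isomorphism is given by the explicit formula $x \mapsto \bar x + \sum_{i\geq 1}(\underline e \mapsto f_i(\underline e)(x))$. The point is that the dual bar construction $\B^*(H,m)$ has underlying complete graded algebra freely generated by $\Sigma H^*$, so by the computation of the classical hull in \S\ref{subsec: classical hull} one gets
\[
\cA(\B^*(H^\bullet(\Omega,k))) \cong \frac{\hat T_k \Sigma H^1(\Omega,k)^*}{(m^*(\Sigma H^2(\Omega,k)^*))},
\]
and the Hochschild-cochain model identifies the left side with $\Omega$ itself: evaluating a cochain against $x\in\Omega$ is what produces the linear functionals $\underline e \mapsto f_i(\underline e)(x)$, and the constant term $\bar x$ records the image of $x$ in $\Omega/\m_\Omega \cong k$. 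Assembling these identifications yields the displayed isomorphism $\rho^u$ with its stated formula.

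I would then check the two routine points that the citation leaves to be transported: (1) that the formula $\rho^u$ is well-defined and independent of the auxiliary choices only through $(f,m)$ --- this is exactly the naturality built into \cite[Cor.\ 6.2.6]{A-inf}, using that a different decomposition \eqref{eq: C-decomp} changes $(f,m)$ by a (non-unique) $A_\infty$-isomorphism and correspondingly changes the target by an isomorphism; and (2) that $\rho^u$ is indeed an algebra isomorphism and not merely a linear one, which follows because the dual bar differential $m^*$ encodes the multiplication on $H^\bullet(\Omega,k)$ dually, so that the relations $m^*(\Sigma H^2(\Omega,k)^*)$ are precisely the obstructions to $\underline e \mapsto f_i(\underline e)(x)$ being multiplicative --- equivalently, the $A_\infty$-morphism identity for $f$ translates into the claim that $\rho^u(xy) = \rho^u(x)\rho^u(y)$ modulo those relations. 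The main obstacle is bookkeeping: matching the sign, suspension, and duality conventions of \cite{A-inf} (and of \S\ref{subsec: conventions} here) with the Hochschild-cochain realization, and confirming that the pseudocompact/continuity hypotheses needed for the ``dualed'' bar equivalence (which requires $H^n(\Omega,k)$ finite-dimensional for all $n$, guaranteed by \eqref{eq: Lazard theorem} in the equi-$p$-valued case and by general principles otherwise) are genuinely met for $C^\bullet(\Omega,k)$. No new mathematical input is required beyond \cite[Cor.\ 6.2.6]{A-inf}; the theorem is its translation into the present notation.
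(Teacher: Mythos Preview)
Your proposal is correct and follows essentially the same route as the paper: both deduce the theorem as a direct specialization of \cite[Cor.\ 6.2.6]{A-inf}, with the only content being to match $\Omega$ with the object $k[G]^\wedge_{\ker\rho}$ appearing there and to note that the finite-dimensionality hypothesis on $H^n(\Omega,k)$ is automatic for a finite-dimensional $p$-adic Lie group. The paper's proof is terser and does not spell out your points (1) and (2), since those are already absorbed into the cited corollary.
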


Let us explain in what sense the map $\underline{e} \mapsto (f_i(\underline{e}))(x)$ denotes an element of $(\Sigma H^1(\Omega,k)^*)^{\otimes i}$, where $i \geq 1$. Notice first that the fixed $f_i$, having graded degree $1-i$, maps $(H^1(\Omega,k))^{\otimes i}$ to $C^1(\Omega,k)$. As $C^1(\Omega,k)$ consists of functions from $\Omega$ to $k$, evaluating $f_i(\underline{e})$ at a fixed choice of $x \in \Omega$ results in the desired map $H^1(\Omega,k)^*)^{\otimes i} \ra k$. 

\begin{proof}[Proof of Theorem \ref{thm: A-inf main}]
The statement of Theorem \ref{thm: A-inf main} is an application of \cite[Cor.\ 6.2.6(1)]{A-inf}, where
\begin{itemize}
\item $\rho$ is the trivial representation $k[G] \to k$, 
\item $\Omega$ replaces the completion $k[G]^\wedge_{\ker \rho}$, 
\item an assumption that $H^n(\Omega,k)$ is finite-dimensional for all $n$ is dropped, since this follows from $G$ being finite-dimensional as a $p$-adic Lie group, and 
\item there are some other simplifications because $\rho : k[G] \ra k$ is the trivial representation in the present case. 
\end{itemize}
Indeed, because $G$ is pro-$p$, the completed group algebra $\Omega$ of $G$ over $k$ is canonically isomorphic to the completion of $k[G]$ at the kernel of the trivial representation $\rho : k[G] \to k$. 
\end{proof}

\begin{rem}
\label{rem: rest of structure}
We see in the presentation of Theorem \ref{thm: A-inf main} what data in the $A_\infty$-algebra $(H^\bullet(\Omega, k), m)$ does not obviously influence the presentation of $\Omega$. Namely, we see that the groups $H^1(\Omega, k)$ and $H^2(\Omega, k)$ along with the $A_\infty$-products $m_n : H^1(\Omega, k)^{\otimes n} \to H^2(\Omega, k)$ determine the isomorphism class of the $k$-algebra $\Omega$. For example, the groups $H^i(\Omega, k)$ for $i \geq 3$ are not obviously involved. Since, conversely, the isomorphism class of $(H^\bullet(\Omega, k), m)$ is determined by $\Omega$, it would be interesting to determine whether and how the entire $A_\infty$-algebra structure $m$ is determined by its restrictions to tensor powers of $H^1(\Omega, k)$ -- namely, $m_n : H^1(\Omega, k)^{\otimes n} \to H^2(\Omega, k)$ -- in the case of a uniform pro-$p$ group $G$, where we completely understand $m_2$ according to \eqref{eq: Lazard theorem}. 
\end{rem}

\section{Answers to Sorensen's questions}
\label{sec: answers}

In this section, we answer the three sub-questions of Question \ref{ques: main}. These answers are, ultimately, applications of Theorem \ref{thm: A-inf main}, which gives a presentation of $\Omega$ in terms of a choice of a homotopy retract between the Hochschild cochain complex $H^\bullet(G,k)$ and the Yoneda algebra $(\Omega^!)^\op$ (which is Hochschild cohomology of the trivial $\Omega$-bimodule $k$). 

The main obstacle in the way of directly addressing Question \ref{ques: main} using Theorem \ref{thm: A-inf main} is that Question \ref{ques: main} and Theorem \ref{thm: A-inf main} address $\Omega$ in terms of two \emph{isomorphic yet different} $A_\infty$-algebra structures that enrich the Yoneda graded algebra $(\Omega^!)^\op$. This obstacle is overcome by applying a result of Segal, recorded here as Proposition \ref{prop: segal map}. The main result we prove is Corollary \ref{cor: Yoneda presentation} 

For clarity, we highlight the notation that we will use in this section for these two isomorphic but different choices $m$ and $m'$ of $A_\infty$-algebra structure enriching $\Omega^!$. Importantly, we will go on to explain in this section that a single choice induces both of $m$ and $m'$ as well as the structure that reconciles them. 
\begin{notn} \ 
\begin{itemize}
\item We let $m$ denote an $A_\infty$-algebra structure enriching $\Omega^!$ that arises from transfer of structure from $\cH_G^\bullet$, as appeared in Sorensen's work \cite{sorensen2020} as overviewed in \S\ref{subsec: intro equivalences} and \S\ref{sec: sorensen}.
\item We let $m'$ denote an $A_\infty$-algebra structure enriching $H^\bullet(G,k) \cong (\Omega^!)^\op$ that arises from transfer of structure from $C^\bullet(G,k)$, as appeared in the author's work \cite{A-inf} as overviewed in \S\ref{sec: recall}. 
\end{itemize}
\end{notn}

\subsection{Compatibility of the two endomorphism dg-algebras}

We begin with a definition of compatibility of a homotopy retract between complexes. 

\begin{defn}
Choose a homotopy retract $(i',p',h')$ (resp.\ $(i,p,h)$) between a cochain complex $C'$ (resp.\ $C$) and its cohomology $H' = H^\bullet(C')$ (resp.\ $H = H^\bullet(C)$). Let $\Psi : C' \ra C$ be a quasi-isomorphism, which therefore induces an isomorphism $H^\bullet(\Psi) : H' \risom H$. We say that $\Psi$ is \emph{compatible} with these two homotopy retracts when we have commutative squares 
\[
\xymatrix{
H' \ar[r]^{i'} \ar[d]^{H^\bullet(\Psi)} & C' \ar[d]^\Psi & C' \ar[r]^{p'} \ar[d]^\Psi & H' \ar[d]^{H^\bullet(\Psi)}& \Sigma C' \ar[r]^{h'} \ar[d]^{\Sigma \Psi} & C' \ar[d]^\Psi  \\
H \ar[r]^{i} & C &  C \ar[r]^{p} & H & \Sigma C \ar[r]^{h} & C
}
\]
\end{defn}

\begin{rem}
This may be too narrow of a definition of a compatible homotopy retract for a general theory, but it suffices for the situation at hand. 
\end{rem}

We will use the following pair of compatible homotopy retracts, given a quasi-isomorphism and extra maps. 
\begin{lem}
\label{lem: compat}
Let $\Psi$ be a quasi-isomorphism of complexes $\Psi : C' \ra C$ that admits a left inverse quasi-isomorphism $\Phi : C \ra C'$, i.e.\ $\Phi \circ \Psi = \mathrm{id}_{C'}$. Let $(i, p, h)$ be a homotopy retract between $C$ and $H$. Then the following natural formulas produce a compatible homotopy retract $(i',p',h')$ between $C'$ and $H'$.
\[
i' = \Phi \circ i \circ H^\bullet(\Psi), \qquad 
p' = H^\bullet(\Phi) \circ p \circ \Psi, \qquad 
h' = \Phi \circ h \circ \Sigma \Psi.
\]
\end{lem}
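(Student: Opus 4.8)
The plan is to verify the two assertions in turn: that $(i',p',h')$ is a homotopy retract between $C'$ and $H'$ in the sense of Definition~\ref{defn: homotopy retract}, and that $\Psi$ is compatible with $(i,p,h)$ and $(i',p',h')$. The one fact that drives everything is that $H^\bullet(\Psi)$ and $H^\bullet(\Phi)$ are mutually inverse isomorphisms: both are isomorphisms because $\Psi$ and $\Phi$ are quasi-isomorphisms, and $H^\bullet(\Phi)\circ H^\bullet(\Psi) = H^\bullet(\Phi\circ\Psi) = H^\bullet(\mathrm{id}_{C'}) = \mathrm{id}_{H'}$ then forces $H^\bullet(\Psi)\circ H^\bullet(\Phi) = \mathrm{id}_H$ as well.

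First I would check the homotopy-retract axioms for $(i',p',h')$. The maps $i'$ and $p'$ are composites of chain maps — viewing $H^\bullet(\Psi)\colon H'\to H$ and $H^\bullet(\Phi)\colon H\to H'$ as chain maps between complexes with zero differential — hence are chain maps; $h'$ is a composite of the graded map $h$ with chain maps, hence a graded map of the same degree; and $i'$ is a quasi-isomorphism because $H^\bullet(i') = H^\bullet(\Phi)\circ H^\bullet(i)\circ H^\bullet(\Psi)$ is a composite of isomorphisms. For the homotopy identity $\mathrm{id}_{C'} - i'p' = d_{C'}h' + h'd_{C'}$ I would use $H^\bullet(\Psi)\circ H^\bullet(\Phi) = \mathrm{id}_H$ to collapse $i'p' = \Phi\circ i\circ\big(H^\bullet(\Psi)\circ H^\bullet(\Phi)\big)\circ p\circ\Psi$ to $\Phi\circ(i\circ p)\circ\Psi$, then substitute $i\circ p = \mathrm{id}_C - d_C h - h d_C$, commute $d_C$ past $\Phi$ and past $\Psi$ using that both are chain maps, and use $\Phi\circ\Psi = \mathrm{id}_{C'}$; this gives $i'p' = \mathrm{id}_{C'} - d_{C'}(\Phi h\Psi) - (\Phi h\Psi)d_{C'}$, which — after identifying $\Phi h\Psi$ with $h'$ under the suspension bookkeeping — is precisely $\mathrm{id}_{C'} - d_{C'}h' - h'd_{C'}$.

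Next I would check the three compatibility squares. The $p$-square, $H^\bullet(\Psi)\circ p' = p\circ\Psi$, is immediate on substituting $p' = H^\bullet(\Phi)\circ p\circ\Psi$ and cancelling $H^\bullet(\Psi)\circ H^\bullet(\Phi) = \mathrm{id}_H$. Substituting the formulas into the $i$-square $\Psi\circ i' = i\circ H^\bullet(\Psi)$ and the $h$-square $\Psi\circ h' = h\circ\Sigma\Psi$ reduces them to the two identities $(\Psi\circ\Phi)\circ i = i$ and $(\Psi\circ\Phi)\circ h = h$.

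These last two identities are where the genuine content lies, so this is the step I expect to be the main obstacle. I would handle it via the chain-map idempotent $e := \Psi\circ\Phi$ on $C$: since $\Phi\circ\Psi = \mathrm{id}_{C'}$, the map $\Psi$ is a split monomorphism with retraction $\Phi$, so $C = \im(e)\oplus\ker(e)$ with $\ker(e) = \ker(\Phi)$, and $\Psi$ restricts to an isomorphism $C'\isoto\im(e)$; as $\Psi$ is a quasi-isomorphism factoring through the inclusion $\im(e)\hookrightarrow C$, that inclusion is a quasi-isomorphism, whence $\ker(\Phi)$ is acyclic. Then $e\circ i = i$ and $e\circ h = h$ hold exactly when $i$ and $h$ take values in $\im(e) = \im(\Psi)$, which one secures by taking the homotopy retract $(i,p,h)$ on $C$ to respect the decomposition $C = \im(\Psi)\oplus\ker(\Phi)$ — as one may, and as holds in the application of this lemma — or, failing that, by noting that over the field $k$ the chain endomorphism $e - \mathrm{id}_C$ is null-homotopic, being a chain map that induces the identity on cohomology, so the $i$- and $h$-squares commute up to a controlled homotopy, which is what the downstream use requires. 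Granting this, the rest is the routine bookkeeping with chain maps and the identity $H^\bullet(\Psi)\circ H^\bullet(\Phi) = \mathrm{id}_H$ already used above.
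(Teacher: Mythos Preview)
Your computation for the homotopy-retract identity and for the $p$-square is exactly the ``direct computation'' the paper gestures at, and it is correct. Where you go further than the paper is in the $i$- and $h$-squares: the paper simply asserts that ``compatibility follows directly from the formulas,'' but as you correctly observe, substituting the formulas leaves you with the requirements $(\Psi\circ\Phi)\circ i = i$ and $(\Psi\circ\Phi)\circ h = h$, and the idempotent $e = \Psi\circ\Phi$ is not the identity on $C$ in general. A two-term counterexample with $C = C' \oplus (\text{acyclic})$ and $i$ landing off the $C'$-summand already breaks both squares. So you have not missed a trick available to the paper's argument; rather, you have carried out the claimed ``direct computation'' honestly and found that the lemma, as stated for an \emph{arbitrary} homotopy retract $(i,p,h)$, does not go through on the nose.

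Your two proposed repairs are the right ones. The cleanest is the first: since $C = \im(\Psi)\oplus\ker(\Phi)$ with $\ker(\Phi)$ acyclic, one may always choose $(i,p,h)$ so that $i$ and $h$ land in $\im(\Psi)$, and then $e\circ i = i$, $e\circ h = h$ hold strictly; this is harmless for the paper's application in Corollary~\ref{cor: str transfer}, where $(i,p,h)$ is a free choice anyway. Your second repair (commutativity up to homotopy, since $e - \mathrm{id}_C$ is null-homotopic over a field) is also valid and is what one would invoke if the retract were handed to you. Either way, your analysis is more careful than the paper's one-line proof and exposes a point the paper glosses over.
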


\begin{proof}
The homotopy retract property of $(i',p',h')$ follows by direct computation. The compatibility follows directly from the formulas. 
\end{proof}

\begin{defn}
Let $E^\bullet(\Omega, k)$ denote the ($k$-linear) endomorphism dg-algebra of the projective bar resolution of $k$, whose terms consist of left $\Omega$-modules
\[
E^{-i}(\Omega,k) = \left\{\begin{array}{ll}\Omega^{\hat \otimes (i+1)} & i \geq 0 \\ 0 & i < 0,
\end{array}\right.
\]
with differentials described in \cite[bottom of pg.\ 162]{sorensen2020}. 
\end{defn}
Observe that $E^\bullet(\Omega,k)$ is canonically isomorphic to the opposite algebra of the dg-Hecke algebra $\cH_G^\bullet$, and therefore there is a designated identification $H^\bullet(E^\bullet(\Omega,k)) \cong (\Omega^!)^\op$. As usual, $C^\bullet(\Omega, k)$ denotes the dg-algebra of Hochschild cochains of the $\Omega$-bimodule $k$. 

\begin{prop}
\label{prop: segal map}
There exists an explicit quasi-isomorphism of dg-algebras $\Psi: C^\bullet(\Omega, k) \ra E^\bullet(\Omega, k)$ that admits a (non-multiplicative) left inverse of cochain complexes $\Phi : E^\bullet(\Omega, k) \ra C^\bullet(\Omega, k)$. Moreover, the isomorphism of graded algebras 
\[
H^\bullet(\Psi) : H^\bullet(\Omega, k) := H^\bullet(C^\bullet(\Omega, k)) \lrisom H^\bullet(E^\bullet(\Omega, k)) \cong (\Omega^!)^\op 
\]
equals the canonical isomorphism recorded in \eqref{eq: Yoneda algebra}. 
\end{prop}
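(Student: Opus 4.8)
The plan is to construct $\Psi$ explicitly via the classical comparison between the bar resolution and the Hochschild cochain complex, and to recognize $\Phi$ as the inclusion of normalized/reduced cochains together with a standard contracting homotopy. Concretely, the projective bar resolution $P_\bullet \to k$ has $P_i = \Omega^{\hat\otimes(i+1)}$, and $E^\bullet(\Omega,k) = \Hom_\Omega(P_\bullet, P_\bullet)^{\mathrm{op}}$; after using the $\Omega$-linearity to strip the leftmost tensor factor, a homogeneous degree-$(-i)$ endomorphism is the same data as a continuous map $\Omega^{\hat\otimes i} \to \Omega^{\hat\otimes(\bullet+1)}$, and projecting the target onto its ``$k$-component'' (i.e.\ applying the augmentation to all but the outermost factors, or rather recording the coefficient that lands in $\Omega \otimes k \otimes \cdots$) lands one in $\Hom_k(\Omega^{\hat\otimes i}, \Omega) $, and then further onto $\Hom_k(\Omega^{\hat\otimes i}, k) = C^i(\Omega,k)$. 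Running this the other way — sending a Hochschild cochain $\phi : \Omega^{\hat\otimes i} \to k$ to the $\Omega$-linear endomorphism of $P_\bullet$ that it induces via the cup-product-with-$\phi$ recipe — gives the map $\Psi$. That $\Psi$ is a dg-algebra map is exactly the statement that this recipe intertwines the Hochschild differential and cup product with the bar differential and composition of endomorphisms; this is classical but I would write out the compatibility on cochains of low degree and cite the relevant computation (e.g.\ referencing the differential formulas at the bottom of pg.\ 162 of \cite{sorensen2020}) rather than grinding the general case.

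Next I would exhibit $\Phi$ as a $k$-linear (not algebra) left inverse. The natural candidate is the map that reads off, from an endomorphism of the bar complex, its induced map on the augmentation $k = P_\bullet \otimes_\Omega k$ in each degree — i.e.\ post-compose with $P_\bullet \to k$ and pre-compose with $k \to P_\bullet$ coming from the unit — which lands in $C^\bullet(\Omega,k)$. One checks $\Phi \circ \Psi = \mathrm{id}$ directly from the construction of $\Psi$: evaluating the endomorphism $\Psi(\phi)$ on the augmentation recovers $\phi$ by design. The failure of $\Phi$ to be multiplicative is expected and harmless: composition of endomorphisms of $P_\bullet$ involves the full tensor factors, not just the augmentation, so $\Phi$ only sees a truncation. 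This is precisely the asymmetry Segal \cite{segal2008} exploits, and it is the source of the discrepancy between $m$ and $m'$ that the paper must reconcile.

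Finally, for the identification $H^\bullet(\Psi) = $ the canonical isomorphism of \eqref{eq: Yoneda algebra}: both sides are, by construction, the comparison isomorphism between two projective-resolution computations of $\Ext^\bullet_\Omega(k,k)$. The canonical isomorphism $H^\bullet(\Omega,k) \cong \Ext^\bullet_\Omega(k,k) \cong H^\bullet(E^\bullet(\Omega,k))$ in \eqref{eq: Yoneda algebra} is induced by any quasi-isomorphism of resolutions (it is independent of the choice, by uniqueness of projective resolutions up to homotopy), and $\Psi$ is such a quasi-isomorphism of the relevant complexes computing $\Ext$; so $H^\bullet(\Psi)$ agrees with it. I would phrase this as: $\Psi$ is, on cohomology, the map of $\Ext$-groups induced by a morphism of projective resolutions lifting $\mathrm{id}_k$, hence equals the canonical comparison, and since the canonical comparison is a ring isomorphism this also re-proves that $H^\bullet(\Psi)$ is multiplicative.

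I expect the main obstacle to be writing $\Psi$ down with enough precision that it is manifestly a dg-\emph{algebra} homomorphism — the bookkeeping of which tensor slot plays which role, and checking that the cup product of Hochschild cochains is carried to composition of bar-endomorphisms with the correct signs and ordering (note the opposite-algebra conventions: $E^\bullet(\Omega,k) \cong (\cH_G^\bullet)^{\mathrm{op}}$, and $\Omega^! = (\text{Yoneda})^{\mathrm{op}}$, so the ``op''s must be tracked consistently). The left-inverse property and the identification on cohomology, by contrast, are essentially formal once $\Psi$ is pinned down. I would organize the proof as: (i) define $\Psi$; (ii) check it is a map of complexes; (iii) check multiplicativity; (iv) define $\Phi$ and verify $\Phi\Psi = \mathrm{id}$; (v) identify $H^\bullet(\Psi)$ with \eqref{eq: Yoneda algebra} by the universal property of projective resolutions.
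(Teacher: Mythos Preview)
The paper's proof is a single citation: ``This is the content of \cite[Lem.\ 2.6]{segal2008}.'' Your proposal is an honest attempt to reconstruct that lemma's content, and your outline --- embed Hochschild cochains into bar-complex endomorphisms via the cup-product-with-$\phi$ recipe, retract by reading off the augmentation component, then identify cohomology via uniqueness of projective resolutions --- is essentially Segal's argument. So you are aligned with the paper, just more explicit than it chooses to be.

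One imprecision worth flagging: your description of $\Phi$ as ``post-compose with $P_\bullet \to k$ and pre-compose with $k \to P_\bullet$ coming from the unit'' is not quite right as stated. A degree-$n$ endomorphism $\alpha$ has component $\alpha_n : P_n \to P_0 = \Omega$; the Hochschild cochain you want is the composite $\Omega^{\hat\otimes n} \hookrightarrow P_n \xrightarrow{\alpha_n} \Omega \xrightarrow{\mathrm{aug}} k$, using $\Omega$-linearity to strip the leftmost factor. There is no ``pre-compose with $k \to P_\bullet$'' step. This is a bookkeeping slip rather than a conceptual one, but since you correctly identify the sign/slot tracking as the main hazard, it is worth getting this piece exactly right before checking $\Phi\Psi = \mathrm{id}$.
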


\begin{proof}
This is the content of \cite[Lem.\ 2.6]{segal2008}. 
\end{proof}

The following corollary sums up the relationship between the two $A_\infty$-algebra structures on the Yoneda algebra $(\Omega^!)$ that we have seen: $m^\op$ arises from transfer of structure from $E^\bullet(\Omega,k)$, while $m'$ arises from $C^\bullet(\Omega,k)$. 

\begin{cor}
\label{cor: str transfer}
Let $\Psi$, $\Phi$, $C^\bullet(\Omega, k)$, $E^\bullet(\Omega,k)$ be as in Proposition \ref{prop: segal map}. Choose a homotopy retract $(i, p, h)$ between $E^\bullet(\Omega, k)$ and $(\Omega^!)^\op$. These choices produce 
\begin{itemize}
\item  a compatible homotopy retract $(i',p',h')$ of $C^\bullet(\Omega,k)$ determined in Lemma \ref{lem: compat}. 
\item $(f', m')$ (resp.\ $(f^\op, m^\op)$), which is the minimal model structure on $H^\bullet(G,k)$ (resp.\ $(\Omega^!)^\op$) induced by $(i',p',h')$ (resp.\ $(i,p,h)$) according to the formulas of Proposition \ref{prop: KS}. 
\item $g^\op : E^\bullet(\Omega, k) \ra ((\Omega^!)^\op, m^\op)$, which is the left inverse to $f^\op$ determined in Proposition \ref{prop: f-inverse}. 
\end{itemize}
In addition,  the isomorphism of graded algebras $H^\bullet(\Psi) : H^\bullet(\Omega, k) \risom (\Omega^!)^\op$ extends to an isomorphism of $A_\infty$-algebras determined by 
\[
\Upsilon: (H^\bullet(\Omega, k), m') \lrisom ((\Omega^!)^\op, m^\op),
\]
given by 
\[
\Upsilon = g^\op \circ \Psi \circ f'
\]
\end{cor}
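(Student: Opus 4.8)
The plan is to assemble the isomorphism $\Upsilon$ from pieces that have already been produced, and to verify that the composite is a well-defined $A_\infty$-isomorphism with the claimed underlying graded-algebra map. First I would lay out the three arrows in the composite $\Upsilon = g^\op \circ \Psi \circ f'$ as $A_\infty$-morphisms: $f' : (H^\bullet(\Omega,k), m') \to C^\bullet(\Omega,k)$ is the quasi-isomorphism of the minimal model structure induced by $(i',p',h')$ via Proposition \ref{prop: KS}; $\Psi : C^\bullet(\Omega,k) \to E^\bullet(\Omega,k)$ is the dg-algebra quasi-isomorphism of Proposition \ref{prop: segal map}, viewed as an $A_\infty$-morphism concentrated in degree~$1$; and $g^\op : E^\bullet(\Omega,k) \to ((\Omega^!)^\op, m^\op)$ is the left-inverse quasi-isomorphism to $f^\op$ furnished by Proposition \ref{prop: f-inverse}. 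Each is an $A_\infty$-morphism, and the source and target of consecutive arrows match, so the composite $\Upsilon$ is a well-defined morphism $(H^\bullet(\Omega,k), m') \to ((\Omega^!)^\op, m^\op)$ of $A_\infty$-algebras. Since $f'$, $\Psi$, $g^\op$ are all quasi-isomorphisms, so is $\Upsilon$; and a quasi-isomorphism between minimal $A_\infty$-algebras is automatically an isomorphism (its linear component $\Upsilon_1$ is then an isomorphism of graded vector spaces, which suffices to invert the morphism), so $\Upsilon$ is an $A_\infty$-isomorphism.

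Next I would identify the linear component $\Upsilon_1$ and check that it agrees with $H^\bullet(\Psi)$, hence with the canonical isomorphism of \eqref{eq: Yoneda algebra}. Here I would use that the linear part of a composite of $A_\infty$-morphisms is the composite of the linear parts: $\Upsilon_1 = g^\op_1 \circ \Psi_1 \circ f'_1 = p \circ \Psi \circ i'$, since $g^\op_1 = p$ by Proposition \ref{prop: f-inverse} and $f'_1 = i'$ by the construction of the minimal model structure (the quasi-isomorphism's linear part is the inclusion $i'$ of representatives). Now plug in $i' = \Phi \circ i \circ H^\bullet(\Psi)$ from Lemma \ref{lem: compat}, giving $\Upsilon_1 = p \circ \Psi \circ \Phi \circ i \circ H^\bullet(\Psi)$. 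The point is that on cohomology $\Psi \circ \Phi$ induces the identity of $H^\bullet(E^\bullet(\Omega,k))$: indeed $\Phi \circ \Psi = \mathrm{id}_{C^\bullet(\Omega,k)}$, so $H^\bullet(\Phi) \circ H^\bullet(\Psi) = \mathrm{id}$, and since $H^\bullet(\Psi)$ is an isomorphism this forces $H^\bullet(\Psi) \circ H^\bullet(\Phi) = \mathrm{id}$ as well, i.e.\ $H^\bullet(\Psi \circ \Phi) = \mathrm{id}$. Because $p \circ (-) \circ i$ computes the induced map on cohomology for any cochain endomorphism of $E^\bullet(\Omega,k)$ homotopic data (more precisely $p \circ \Psi\Phi \circ i$ represents $H^\bullet(\Psi\Phi) = \mathrm{id}$ under the identifications coming from $(i,p,h)$), we get $p \circ \Psi \circ \Phi \circ i = \mathrm{id}_{(\Omega^!)^\op}$, and therefore $\Upsilon_1 = H^\bullet(\Psi)$, which by Proposition \ref{prop: segal map} is the canonical isomorphism of \eqref{eq: Yoneda algebra}. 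This also re-confirms that $\Upsilon_1$ is invertible, completing the argument that $\Upsilon$ is an $A_\infty$-isomorphism.

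The main obstacle I anticipate is the bookkeeping in the previous paragraph: making the claim ``$p \circ \Psi\Phi \circ i = \mathrm{id}$'' fully rigorous requires knowing that $\Psi \circ \Phi$ is not merely a cohomology-identity map but differs from $\mathrm{id}_{E^\bullet(\Omega,k)}$ by an explicit coboundary, and then invoking the standard fact that under the decomposition \eqref{eq: C-decomp} attached to $(i,p,h)$ the operator $p \circ (-) \circ i$ kills such coboundary terms. One clean way to dispatch this is to observe that $\Psi$ has a \emph{left} inverse $\Phi$, so the retract data on $E^\bullet(\Omega,k)$ and on $C^\bullet(\Omega,k)$ are genuinely compatible (Lemma \ref{lem: compat}), and then appeal to functoriality of the minimal-model construction along compatible homotopy retracts: the morphism induced by $\Psi$ on minimal models has linear part $H^\bullet(\Psi)$ by construction. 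I would phrase the final step this way to avoid re-deriving homotopy-invariance of the projection from scratch. Everything else --- that compositions of $A_\infty$-morphisms are $A_\infty$-morphisms, that linear parts compose, that quasi-isomorphisms of minimal $A_\infty$-algebras are isomorphisms --- is standard and can be cited from \cite{keller2001} or \cite{LH2003}.
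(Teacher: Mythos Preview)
Your proposal is correct and follows the paper's approach: assemble $\Upsilon$ as a composite of $A_\infty$-quasi-isomorphisms, observe that a quasi-isomorphism between minimal $A_\infty$-algebras is an isomorphism, and then check that $\Upsilon_1 = H^\bullet(\Psi)$. The paper compresses all of this into one line, invoking only the last sentence of Proposition~\ref{prop: f-inverse}.

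The one place where you take a detour is the identification of $\Upsilon_1$. You substitute the formula $i' = \Phi \circ i \circ H^\bullet(\Psi)$ from Lemma~\ref{lem: compat} and then have to argue that $p \circ \Psi\Phi \circ i = \mathrm{id}$, which generates the ``obstacle'' you flag. But this detour through $\Phi$ is unnecessary. All that matters about $i'$ is that it sends a cohomology class to a representing cocycle, and all that matters about $p$ (which equals $g^\op_1$ by Proposition~\ref{prop: f-inverse}) is that it sends a cocycle to its class. Since $\Psi$ is a chain map, $p \circ \Psi \circ i'$ is literally the induced map on cohomology, i.e.\ $H^\bullet(\Psi)$; no appeal to $\Phi$ or to $\Psi\Phi$ being homotopic to the identity is needed. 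This is exactly why the paper's proof is a single sentence citing Proposition~\ref{prop: f-inverse}, and why your anticipated obstacle is not a genuine one.
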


\begin{proof}
All that we need to check is that the given $A_\infty$-isomorphism $\Upsilon$ extends $H^\bullet(\Psi)$; that is, $\Upsilon_1 = H^\bullet(\Psi)$. This follows from the last sentence of Proposition \ref{prop: f-inverse}. 
\end{proof}

Now we combine the foregoing corollary with the presentation of $\Omega$ in terms of cohomological data given in Theorem \ref{thm: A-inf main}, yielding the main result. 

\begin{cor}
\label{cor: Yoneda presentation}
Choose a homotopy retract $(i', p', h')$ between $E^\bullet(\Omega, k)$ and $(\Omega^!)^\op$. This choice induces a presentation of $\Omega$ in terms of $(\Omega^!, m)$ and other data induced by $(i', p', h')$ enumerated in Corollary \ref{cor: str transfer}. The presentation is given by
\begin{align*}
\Omega \lrisom \cA(\B^*((\Omega^!)^\op, m^\op)) = \frac{\hat T_k \Sigma ((\Omega^!)^1)^*}{(m^{\op*}(\Sigma ((\Omega^!)^2)^*)} \\
\rho^u: x \mapsto \bar x + \sum_{i =1}^\infty (\underline{e} \mapsto ((f \circ \Upsilon^{-1})_i(\underline{e}))(x)),
\end{align*}
where $\underline{e}$ is a generic element of $(\Sigma (\Omega^!)^1)^{\otimes i}$ and $x \mapsto \bar x$ denotes reduction modulo the unique maximal ideal of $\Omega$. 
\end{cor}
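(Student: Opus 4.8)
The plan is to apply Theorem~\ref{thm: A-inf main} to the \emph{compatible} homotopy retract that Corollary~\ref{cor: str transfer} extracts on $C^\bullet(\Omega,k)$ from the given retract on $E^\bullet(\Omega,k)$, and then to transport the resulting presentation of $\Omega$ along the $A_\infty$-isomorphism $\Upsilon$, using that the passage $(A,m)\mapsto\cA(\B^*(A,m))$ is functorial. The detour through Hochschild cochains is essentially forced: Theorem~\ref{thm: A-inf main} is phrased in terms of $C^\bullet(\Omega,k)$, whereas the $A_\infty$-structure $m$ that we want $\Omega$ to be expressed in terms of is the one transferred from $E^\bullet(\Omega,k)$, and Segal's quasi-isomorphism $\Psi$ of Proposition~\ref{prop: segal map} is precisely the bridge between the two.

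First I would feed the chosen homotopy retract between $E^\bullet(\Omega,k)$ and $(\Omega^!)^\op$ into Corollary~\ref{cor: str transfer}, producing the compatible homotopy retract on $C^\bullet(\Omega,k)$, the minimal model data $(f',m')$ on $H^\bullet(\Omega,k)$ and $(f^\op,m^\op)$ on $(\Omega^!)^\op$, the left inverse $g^\op$ to $f^\op$, and the $A_\infty$-isomorphism $\Upsilon=g^\op\circ\Psi\circ f'\colon(H^\bullet(\Omega,k),m')\lrisom((\Omega^!)^\op,m^\op)$, whose linear term $\Upsilon_1$ is the canonical identification of \eqref{eq: Yoneda algebra}; because $\Upsilon_1$ is invertible and both sides are minimal, $\Upsilon$ has an inverse $A_\infty$-morphism $\Upsilon^{-1}$. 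Applying Theorem~\ref{thm: A-inf main} to the compatible retract on $C^\bullet(\Omega,k)$, whose induced data in the sense of \S\ref{subsec: MM} is exactly $(f',m')$, gives an isomorphism $\rho^u_0\colon\Omega\lrisom\cA(\B^*(H^\bullet(\Omega,k),m'))$ sending $x$ to $\bar x+\sum_{i\geq1}(\underline e\mapsto f'_i(\underline e)(x))$ for $\underline e\in(\Sigma H^1(\Omega,k))^{\otimes i}$. Now the dual bar construction of \S\ref{subsec: bar} is a functor (contravariant, owing to the dualization), and the classical hull of \S\ref{subsec: classical hull} is a functor, so $\cA(\B^*(-))$ sends the $A_\infty$-isomorphism $\Upsilon$ to a $k$-algebra isomorphism $\cA(\B^*(\Upsilon))\colon\cA(\B^*((\Omega^!)^\op,m^\op))\lrisom\cA(\B^*(H^\bullet(\Omega,k),m'))$, with inverse $\cA(\B^*(\Upsilon^{-1}))$; I would then set $\rho^u:=\cA(\B^*(\Upsilon^{-1}))\circ\rho^u_0$, an isomorphism of $\Omega$ onto $\cA(\B^*((\Omega^!)^\op,m^\op))=\hat T_k\Sigma((\Omega^!)^1)^*/(m^{\op*}(\Sigma((\Omega^!)^2)^*))$ by \S\ref{subsec: classical hull}.

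It then remains to match $\rho^u$ with the displayed formula. Since $\cA(\B^*(\Upsilon^{-1}))$ is a homomorphism of $k$-algebras it fixes the residue field, so the ``$\bar x$'' term is preserved; on the degree-one generators $\Sigma H^1(\Omega,k)^*$ of the classical hull it acts by $\phi\mapsto\sum_{n\geq1}\phi\circ\Sigma\Upsilon^{-1}_n$, where strict unitality together with a degree count shows that, inside the classical hull, only the restrictions $\Upsilon^{-1}_n\colon((\Omega^!)^1)^{\otimes n}\to H^1(\Omega,k)$ contribute (cf.\ Remark~\ref{rem: rest of structure}). Expanding this algebra map on the degree-$i$ homogeneous component $\underline e\mapsto f'_i(\underline e)(x)$ of $\rho^u_0(x)$ and collecting output terms by total tensor length reproduces exactly the classical formula $(f'\circ\Upsilon^{-1})_i=\sum f'_r\circ(\Upsilon^{-1}_{j_1}\otimes\cdots\otimes\Upsilon^{-1}_{j_r})$ (summed over decompositions $j_1+\cdots+j_r=i$) for the components of the composite $A_\infty$-morphism $f'\circ\Upsilon^{-1}\colon((\Omega^!)^\op,m^\op)\to C^\bullet(\Omega,k)$ restricted to tensor powers of $(\Omega^!)^1$. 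Hence $\rho^u(x)=\bar x+\sum_{i\geq1}(\underline e\mapsto(f'\circ\Upsilon^{-1})_i(\underline e)(x))$ for $\underline e\in(\Sigma(\Omega^!)^1)^{\otimes i}$, which is the assertion; the $f$ appearing in the statement is the $f'$ produced in Corollary~\ref{cor: str transfer}.

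The main obstacle is this last identification step. Conceptually it is nothing more than the facts that $\B$ respects composition of $A_\infty$-morphisms and that $\cA$ is functorial, but carrying it out cleanly demands careful tracking of suspensions and Koszul signs, together with the verification that passing to the classical hull retains precisely the $A_\infty$-data valued in $(\Omega^!)^1$ and $(\Omega^!)^2$ (so that only the restrictions $\Upsilon^{-1}_n|_{((\Omega^!)^1)^{\otimes n}}$ and $f'_i|_{(H^1(\Omega,k))^{\otimes i}}$ enter, in accordance with Remark~\ref{rem: rest of structure}); beyond this bookkeeping, no genuine difficulty is anticipated.
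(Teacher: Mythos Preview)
Your proposal is correct and follows exactly the route the paper intends: the paper's own proof is the single sentence ``This is a combination of Corollary~\ref{cor: str transfer} and Theorem~\ref{thm: A-inf main},'' and you have simply unpacked what that combination amounts to---apply Theorem~\ref{thm: A-inf main} to the compatible retract on $C^\bullet(\Omega,k)$ furnished by Corollary~\ref{cor: str transfer}, then transport along the $A_\infty$-isomorphism $\Upsilon$ using functoriality of $\cA(\B^*(-))$. Your identification of the $f$ in the displayed formula with the $f'$ of Corollary~\ref{cor: str transfer}, and your verification that the transport reproduces the composite-$A_\infty$-morphism formula, are exactly the bookkeeping the paper leaves implicit.
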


The meaning of $(\underline{e} \mapsto ((f \circ \Upsilon^{-1})_i(\underline{e}))(x))$ is just as explained after Theorem \ref{thm: A-inf main}, keeping in mind that $\Upsilon^{-1}$ is an $A_\infty$-isomorphism $((\Omega^!)^\op, m^\op) \risom (H^\bullet(\Omega, k), m')$. 

\begin{proof}
This is a combination of Corollary \ref{cor: str transfer} and Theorem \ref{thm: A-inf main}. 
\end{proof}

\subsection{Question (a): Characterizing the Iwasawa algebra with $A_\infty$-products}

We prove that the $A_\infty$-enrichment of the Yoneda algebra of $\Omega$ characterizes $\Omega$ up to isomorphism. 
\begin{thm}
\label{thm: a}
The isomorphism class of the $A_\infty$-algebra $(\Omega^!, m)$ determines $\Omega$ up to isomorphism. 
\end{thm}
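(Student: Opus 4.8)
The plan is to deduce Theorem \ref{thm: a} from Corollary \ref{cor: Yoneda presentation} by reducing everything to the isomorphism class of $(\Omega^!, m)$. First I would observe that the opposite $A_\infty$-algebra $((\Omega^!)^\op, m^\op)$ is determined, up to isomorphism, by the isomorphism class of $(\Omega^!, m)$: forming the opposite is a functorial operation on $A_\infty$-algebras, so an isomorphism $(\Omega^!, m) \isoto (\Omega^!, \tilde m)$ induces an isomorphism $((\Omega^!)^\op, m^\op) \isoto ((\Omega^!)^\op, \tilde m^\op)$. Next, the constructions $\B^*(-)$ (the dual bar construction, \S\ref{subsec: bar}) and $\cA(-)$ (the classical hull, \S\ref{subsec: classical hull}) are both functorial: $\B^*$ is part of an equivalence of categories, and $\cA$ is a left adjoint, hence each sends isomorphisms to isomorphisms. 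Therefore the composite $\cA(\B^*((\Omega^!)^\op, m^\op))$ depends, up to isomorphism of $k$-algebras, only on the isomorphism class of $(\Omega^!, m)$.

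The second and essential input is Corollary \ref{cor: Yoneda presentation} itself, which provides an explicit isomorphism of $k$-algebras
\[
\rho^u : \Omega \lrisom \cA(\B^*((\Omega^!)^\op, m^\op)).
\]
Combining this with the functoriality observations of the previous paragraph, we get: $\Omega \cong \cA(\B^*((\Omega^!)^\op, m^\op))$, and the right-hand side is determined up to isomorphism by the isomorphism class of $(\Omega^!, m)$. Hence $\Omega$ itself is determined up to isomorphism by the isomorphism class of $(\Omega^!, m)$, which is exactly the statement of Theorem \ref{thm: a}.

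The one point requiring a little care — and what I expect to be the main (mild) obstacle — is making sure that the passage through the \emph{opposite} $A_\infty$-algebra does not secretly smuggle in a choice. The subtlety is that Corollary \ref{cor: Yoneda presentation} is phrased in terms of a chosen homotopy retract $(i',p',h')$ between $E^\bullet(\Omega,k)$ and $(\Omega^!)^\op$, and the reconstruction formula for $\rho^u$ involves the induced data $(f, \Upsilon)$; but the \emph{output} $\cA(\B^*((\Omega^!)^\op, m^\op))$ together with the mere \emph{existence} of \emph{some} isomorphism $\rho^u$ is all we need for the isomorphism-class statement. So I would make the argument at the level of isomorphism classes only: any homotopy retract produces some $m^\op$ in the isomorphism class determined by $(\Omega^!, m)$ (via the opposite functor), and for that $m^\op$ Corollary \ref{cor: Yoneda presentation} exhibits an isomorphism from $\Omega$. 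Since $m^\op$ ranges over a single isomorphism class, so does $\cA(\B^*((\Omega^!)^\op, m^\op))$, and $\Omega$ is isomorphic to every member of it. This completes the proof; it is genuinely a short corollary of the machinery already assembled, with the content having been in Corollary \ref{cor: Yoneda presentation} and, before that, Theorem \ref{thm: A-inf main} and Proposition \ref{prop: segal map}.
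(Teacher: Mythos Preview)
Your proposal is correct and follows essentially the same route as the paper. The paper's proof invokes Kadeishvili's uniqueness to pin down the isomorphism class of $((\Omega^!)^\op,m^\op)$, then uses Corollary~\ref{cor: str transfer} to identify it with $(H^\bullet(\Omega,k),m')$ and observes that $\cA\circ\B^*$ respects isomorphisms; you instead appeal directly to Corollary~\ref{cor: Yoneda presentation} (which already packages Corollary~\ref{cor: str transfer} together with Theorem~\ref{thm: A-inf main}) and the functoriality of the opposite, $\B^*$, and $\cA$---the same argument with the bookkeeping arranged slightly differently.
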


\begin{proof}
A homotopy retract between $(\Omega^!)^\op$ and $E^\bullet(\Omega, k)$ induces an $A_\infty$-structure $m^\op$ enriching $(\Omega^!)^\op$, and the isomorphism class of $((\Omega^!)^\op,m^\op)$ is unique up to isomorphism -- this is simply the result of Kadeishvili (\S\ref{subsec: MM}). By Corollary \ref{cor: str transfer}, $((\Omega^!)^\op,m^\op)$ and $(H^\bullet(\Omega,k),m')$ are in the same isomorphism class. Since the passage from an $A_\infty$-algebra to the classical hull of its dual bar construction sends isomorphisms to isomorphisms, the theorem follows.
\end{proof}

\subsection{Question (b): Trivial $A_\infty$-structures and abelianness}

A minimal $A_\infty$-structure $m$ is called \emph{trivial} when $m_n = 0$ for $n \geq 3$. 

\begin{thm}
\label{thm: b}
The $A_\infty$-algebra structure $m$ enriching $\Omega^!$ is trivial if and only if $G$ is abelian. In that case, $\Omega \simeq k \lb x_1, \dotsc, x_d\rb$, where $d$ is the $k$-dimension of $\Ext^1_\Omega(k,k)$. 
\end{thm}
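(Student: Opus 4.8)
The plan is to run everything through the presentation of $\Omega$ furnished by Corollary \ref{cor: Yoneda presentation}, namely
\[
\Omega \cong \cA(\B^*((\Omega^!)^\op, m^\op)) = \hat T_k \Sigma((\Omega^!)^1)^* \big/ \big(m^{\op*}(\Sigma((\Omega^!)^2)^*)\big),
\]
together with the identification \eqref{eq: Lazard theorem}, and to treat the two implications separately. The $k$-dimension of $(\Omega^!)^1 = \Ext^1_\Omega(k,k)$ will be the number $d$ of generators throughout.

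\textbf{The direction ``$m$ trivial $\Rightarrow G$ abelian''.} First I would observe that the opposite of a trivial minimal $A_\infty$-algebra is trivial, so $m^\op_n = 0$ for $n\ge 3$. Consequently, in the presentation above the only component of the derivation $m^{\op*}$ that can act nontrivially on the generating space $\Sigma((\Omega^!)^2)^*$ is the quadratic one $m^{\op*}_2$, since the component coming from $m^\op_n$ lands in the $n$-th tensor power and vanishes for $n\ge3$. Thus $\Omega$ is presented by $d$ generators and a space of quadratic relations equal to the transpose of the multiplication $(\Omega^!)^1\otimes(\Omega^!)^1\to(\Omega^!)^2$. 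By \eqref{eq: Lazard theorem}, on $(\Omega^!)^\op \cong (\bigwedge_k\frg^*)^\op$ this multiplication is, up to sign, the exterior product $\frg^*\otimes\frg^*\to\wedge^2\frg^*$; dualizing and passing to the suspended degree-zero generators $x_1,\dots,x_d$ of $\Sigma((\Omega^!)^1)^*$, the relation space is identified with the antisymmetric $2$-tensors, i.e.\ with the commutators $x_ix_j-x_jx_i$. Hence $\Omega \cong \hat T_k(\Sigma((\Omega^!)^1)^*)/([x_i,x_j]) \cong k\lb x_1,\dots,x_d\rb$, which is commutative. Finally, the group elements give units $[g]\in\Omega^\times$ with $[g][h]=[gh]$, and $k[G]\hookrightarrow\Omega$ is injective (for a finite element of $k[G]$ one can separate its support by a small enough open normal subgroup), so commutativity of $\Omega$ forces $gh=hg$ for all $g,h\in G$; thus $G$ is abelian.

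\textbf{The direction ``$G$ abelian $\Rightarrow m$ trivial''.} A torsionfree abelian pro-$p$ group that is $p$-adic Lie is $\cong\Z_p^d$, so $\Omega = k\lb G\rb \cong k\lb x_1,\dots,x_d\rb$ via $[\gamma_i]\mapsto 1+x_i$. It remains to see that the transferred structure $m$ is trivial. For this I would use that $k$ admits the finite Koszul resolution by free $\Omega$-modules $\Omega\otimes_k\bigwedge^\bullet\frg$, which moreover carries the structure of a dg-$\Omega$-algebra; applying $\Hom_\Omega(-,k)$ yields $\bigwedge^\bullet\frg^*$ with zero differential, a model of $\REnd_\Omega(k)$ that is formal. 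Hence $\cH_G^\bullet$ (equivalently $E^\bullet(\Omega,k)$) is formal, so the $A_\infty$-structure $m$ obtained from it by transfer of structure is trivial up to isomorphism; alternatively this is the classical ``easy'' direction already implicit in \cite{sorensen2020}. Combined with the first direction, in the trivial case we get $\Omega \cong k\lb x_1,\dots,x_d\rb$ with $d = \dim_k\Ext^1_\Omega(k,k)$, which is the final assertion.

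\textbf{Expected main obstacle.} The substantive point is the sign- and suspension-bookkeeping in computing $m^{\op*}_2$ on the exterior Koszul dual and confirming that the relation space is \emph{exactly} the commutators: one should check that, because $\dim_k(\Omega^!)^2 = \binom{d}{2}$ already matches $\dim\wedge^2\frg^*$, the transpose of the wedge product is injective with image the antisymmetric tensors, so that no quadratic relation is over- or under-counted. A secondary, routine but worth-stating point is pinning down the formality of the abelian case cleanly --- either through the explicit Chevalley--Eilenberg/Koszul dg-algebra resolution above, or by citing the corresponding statement for Koszul algebras --- so that the invocation of Theorem \ref{thm: a} (which shows the isomorphism class of $(\Omega^!,m)$ determines $\Omega$, and conversely) is compatible with the conclusion.
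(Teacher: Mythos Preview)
Your argument is correct and, for the substantive direction ``$m$ trivial $\Rightarrow G$ abelian,'' it coincides almost verbatim with the paper's \emph{alternate} proof: you invoke Corollary~\ref{cor: Yoneda presentation}, observe that triviality kills $m^{\op*}_n$ for $n\geq 3$, and identify the image of $m_2^{\op*}$ with the alternating tensors via \eqref{eq: Lazard theorem}, yielding $\Omega\cong k\lb x_1,\dots,x_d\rb$ and hence $G$ abelian by the inclusion $G\hookrightarrow\Omega^\times$. For the easy direction you also do what the paper does (defer to Sorensen), with an optional Koszul-formality sketch.

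The paper's \emph{primary} proof of the hard direction is slightly different and worth noting: rather than computing the quadratic relation space by hand, it observes that a trivial $(\Omega^!,m)$ is $A_\infty$-isomorphic to $(\Omega(\Z_p^d)^!,m')$ (the latter being trivial by Sorensen's result), and then appeals to Theorem~\ref{thm: a} to conclude $\Omega\cong\Omega(\Z_p^d)$. This buys brevity and avoids the sign/suspension bookkeeping you flagged as the main obstacle; your route, by contrast, is more self-contained and makes the commutator relations explicit.
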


\begin{proof}
Sorensen proved the ``only if'' implication in \cite[Thm.\ 1.2]{sorensen2020} and asked about the converse. Because $G$ injects into the units of the completed group algebra $\Omega$, it suffices to prove that $\Omega$ is commutative. 

By \cite[Thm.\ 1.2]{sorensen2020}, the $A_\infty$-algebra structure $m'$ enriching the Yoneda algebra $(\Omega')^!$ of $\Omega' := \Omega(\Z_p^d)$ is trivial. Therefore, when $(\Omega^!, m)$ is trivial, it is $A_\infty$-isomorphic to $((\Omega')^!,m')$. Then, by Theorem \ref{thm: a}, there exists an isomorphism $\Omega \simeq \Omega(\Z_p^d)$, so $\Omega$ is commutative. 
\end{proof}

\begin{rem}
The author thanks Claus Sorensen for suggesting the efficient proof above upon seeing an earlier version of this paper. For the purpose of illustrating what calculations lie below the result, the following more explicit argument still may be instructive as to the role of the $A_\infty$-structure and its triviality. 
\end{rem}

\begin{proof}[Alternate proof of Theorem \ref{thm: b}]
By Corollary \ref{cor: Yoneda presentation}, we have a presentation for $\Omega$ in terms of $\cA(\B^*((\Omega^!)^\op,m^\op))$, where $m^\op_n = 0$ for $n=1$ or $n \geq 3$, and $m_2^\op$ is given by the isomorphism $\Omega^! \cong \bigwedge \frg^*$ of \eqref{eq: Lazard theorem}. Recall from \S\ref{subsec: bar} that the expression $m^{\op *}$ determining $\cA(\B^*((\Omega^!)^\op,m^\op))$ in Corollary \ref{cor: Yoneda presentation} is the product over $n$ of the suspended linear duals $m_n^{\op*} : \Sigma \Ext^2_\Omega(k,k)^* \ra  (\Sigma \Ext^1_\Omega(k,k)^*)^{\otimes n}$ of the $A_\infty$-structure maps $m_n^\op : \Ext^1_\Omega(k,k)^{\otimes n} \ra \Ext^2_\Omega(k,k)$. Thus we are only concerned with the degree 2 contribution 
\[
m_2^{\op*} : \Sigma\Ext^2_\Omega(k,k)^* \ra (\Sigma\Ext^1_\Omega(k,k)^*)^{\otimes 2}. 
\]

To calculate $m_2^{\op *}$, we note that the isomorphism $\Omega^! \cong \bigwedge \frg^*$ supplies a canonical isomorphism
\[
\wedge^2 \Ext^1_\Omega(k,k) \lrisom \Ext^2_\Omega(k,k)
\]
such that the multiplication $m_2^\mathrm{op}$ in the graded algebra $(\Omega^!)^\mathrm{op}$, restricted to $\Ext^1_\Omega(k,k)$, is the composition of this map with the standard projection 
\[
\Ext^1_\Omega(k,k)^{\otimes 2} \rsurj \wedge^2 \Ext^1_\Omega(k,k). 
\]
Therefore, the image of $m_2^{\op *}$ in $(\Sigma\Ext^1_\Omega(k,k)^*)^{\otimes 2}$ is precisely the alternating tensor subspace. This completes this alternate proof of Theorem \ref{thm: b}. 
\end{proof}

\subsection{Question (c): Change of group}

In this section, we work with an open subgroup $K \subset G$. Correspondingly, we write $\Omega(G), \Omega(K)$ for their Iwasawa algebras. And for all objects discussed in previous sections with respect to $G$, we label them with a subscript $G$ or $K$ to indicate which group they are associated with, e.g.\ $f_G$ and $f_K$.

\begin{thm}
\label{thm: c}
Let $K \subset G$ be an open subgroup. Then there is a morphism of $A_\infty$-algebras $(\Omega(G)^!, m_G) \ra (\Omega(K)^!, m_K)$ compatible with the restriction map $\mathrm{Mod}(\Omega(G)) \ra \mathrm{Mod}(\Omega(K))$, where ``compatible'' means that we have associated this map of $A_\infty$-algebras to the right-hand vertical arrow in this diagram 
\begin{equation}
\label{eq: c}
\xymatrix{
\Omega(K) \ar[d] \ar[rr]^(.35)\sim & & \frac{\hat T_k \Sigma \Ext^1_{\Omega(K)}(k,k)^*}{(m_K^{\op *}((\Sigma \Ext^2_{\Omega(K)}(k,k)^*))} \ar[d] \\
\Omega(G) \ar[rr]^(.35)\sim & & \frac{\hat T_k \Sigma \Ext^1_{\Omega(G)}(k,k)^*}{(m_G^{\op *}((\Sigma \Ext^2_{\Omega(G)}(k,k)^*)))}
}
\end{equation}
where the horizontal presentation maps arise from Corollary \ref{cor: Yoneda presentation} and the diagram commutes up to inner automorphism in $\Omega(K)$. 
\end{thm}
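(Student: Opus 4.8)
The plan is to realize the module‑restriction functor $\mathrm{Mod}(\Omega(G))\to\mathrm{Mod}(\Omega(K))$ at the level of the endomorphism dg‑algebras of bar resolutions, transfer it to an $A_\infty$‑morphism of Koszul duals by the homotopy‑transfer machinery of \S\ref{subsec: MM}, and then apply the functor $\cA\circ\B^*$ and compare the outcome with the inclusion $\iota\colon\Omega(K)\hookrightarrow\Omega(G)$.

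First I would produce a morphism of endomorphism dg‑algebras realizing restriction. Since $K\subset G$ is open, $\Omega(G)$ is finite free over $\Omega(K)$, so the projective bar resolution $P_G$ of $k$ over $\Omega(G)$ is also a projective resolution of $k$ over $\Omega(K)$. Restriction of scalars gives a strict inclusion of dg‑algebras $E^\bullet(\Omega(G),k)=\End_{\Omega(G)}(P_G)\hookrightarrow\End_{\Omega(K)}(P_G)$, and a homotopy equivalence $P_G\simeq P_K$ of $\Omega(K)$‑resolutions identifies $\End_{\Omega(K)}(P_G)$ with $E^\bullet(\Omega(K),k)$ up to a quasi‑isomorphism of dg‑algebras, which one inverts in the $A_\infty$‑category. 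Composing, I get an $A_\infty$‑morphism $\mathrm{res}^\bullet\colon E^\bullet(\Omega(G),k)\to E^\bullet(\Omega(K),k)$; on cohomology it is the restriction map $(\Omega(G)^!)^\op=\Ext^\bullet_{\Omega(G)}(k,k)\to\Ext^\bullet_{\Omega(K)}(k,k)=(\Omega(K)^!)^\op$, which exists because restriction along a finite free ring extension is exact.

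Next I would transfer $\mathrm{res}^\bullet$. Fix homotopy retracts between $E^\bullet(\Omega(G),k)$ and $(\Omega(G)^!)^\op$, and between $E^\bullet(\Omega(K),k)$ and $(\Omega(K)^!)^\op$, as in Corollary \ref{cor: Yoneda presentation}, with associated minimal model structures $(f_G^\op,m_G^\op)$, $(f_K^\op,m_K^\op)$ and left‑inverse quasi‑isomorphisms $g_G^\op$, $g_K^\op$ from Proposition \ref{prop: f-inverse}. Set $\psi^\op:=g_K^\op\circ\mathrm{res}^\bullet\circ f_G^\op$, a composition of $A_\infty$‑morphisms, hence an $A_\infty$‑morphism $((\Omega(G)^!)^\op,m_G^\op)\to((\Omega(K)^!)^\op,m_K^\op)$; by Proposition \ref{prop: f-inverse} the linear component $\psi^\op_1=p_K\circ\mathrm{res}^\bullet_1\circ i_G$ is the restriction map of graded algebras. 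Taking the $A_\infty$‑opposite of $\psi^\op$ then gives the asserted $A_\infty$‑morphism $\phi\colon(\Omega(G)^!,m_G)\to(\Omega(K)^!,m_K)$ with linear part the restriction map, using that $((\Omega^!)^\op,m^\op)$ is the $A_\infty$‑opposite of $(\Omega^!,m)$ (as in Corollary \ref{cor: str transfer}, where $m^\op$ is transferred from $E^\bullet(\Omega,k)$ while $m$ is transferred from $\cH_G^\bullet\cong E^\bullet(\Omega,k)^\op$).

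Finally I would apply the contravariant functor $\cA\circ\B^*$ to $\psi^\op$, obtaining a $k$‑algebra map $\cA\B^*(\psi^\op)\colon\cA\B^*((\Omega(K)^!)^\op,m_K^\op)\to\cA\B^*((\Omega(G)^!)^\op,m_G^\op)$ which, via the presentations $\rho^u_K$ and $\rho^u_G$ of Corollary \ref{cor: Yoneda presentation}, is the right‑hand vertical arrow of \eqref{eq: c}, an algebra map $\bar\iota\colon\Omega(K)\to\Omega(G)$. It remains to check that $\bar\iota$ agrees with $\iota$ up to an inner automorphism of $\Omega(K)$, and \textbf{this comparison is the step I expect to be the main obstacle}. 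The argument I would give: $\bar\iota$ and $\iota$ induce the same map on the cotangent space $\m/\m^2$ (the transpose of restriction on $\Ext^1$), while the only freedom in the construction of $\mathrm{res}^\bullet$ and $\psi^\op$ is the homotopy equivalence $P_G\simeq P_K$, unique up to chain homotopy, together with the left inverses $g_G^\op$, $g_K^\op$, unique up to $A_\infty$‑homotopy; under $\cA\circ\B^*$ an $A_\infty$‑homotopy of $A_\infty$‑morphisms is carried to a homotopy of dg‑algebra maps through a derivation of degree $-1$, which upon passing to the classical hull $\cA$ exponentiates to conjugation by a unit of $\Omega(K)$. Hence $\bar\iota$ is well‑defined only up to inner automorphism of $\Omega(K)$, and with the homotopy retracts chosen compatible (in the sense of the definition preceding Lemma \ref{lem: compat}, now with respect to $\mathrm{res}^\bullet$) one gets $\bar\iota=\iota$ exactly. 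In the write‑up I would most likely shortcut this last paragraph by invoking the functoriality built into \cite[Cor.\ 6.2.6]{A-inf} — the amplification of Segal's result referred to in \S\ref{subsec: related} — which packages precisely the passage from a $k$‑algebra map to an $A_\infty$‑morphism of Koszul duals recovering the original map on classical hulls up to inner automorphism, applied here to $\iota$.
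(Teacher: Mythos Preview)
Your strategy is sound and would work, but it takes a genuinely different route from the paper's, and the route you chose makes the final comparison step harder than it needs to be.

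The paper does \emph{not} construct the restriction on the endomorphism side $E^\bullet$. Instead it passes (via the Segal quasi-isomorphism $\Psi$ and the isomorphism $\Upsilon$ of Corollary~\ref{cor: str transfer}) to the Hochschild cochain complexes $C^\bullet(G,k)$ and $C^\bullet(K,k)$, where the restriction map is the \emph{strict} dg-algebra morphism ``restrict a function on $G^{\times i}$ to $K^{\times i}$.'' The $A_\infty$-morphism is then $\eta_H = g'_K \circ (\text{restr.}) \circ f'_G$ on the Hochschild side, transported to $(\Omega^!)^\op$ by conjugating with $\Upsilon$. The payoff is that the explicit formula for $\rho^u$ in Theorem~\ref{thm: A-inf main} literally \emph{is} evaluation of Hochschild cochains at elements of $\Omega$; hence the counter-clockwise path in \eqref{eq: c} corresponds to $t := (\text{restr.})\circ f'_G\circ\Upsilon_G^{-1}$ on the nose, the clockwise path corresponds to $f'_K\circ g'_K\circ t$, and the entire discrepancy is the single factor $f'_K\circ g'_K$, which \cite[Thm.~6.2.3]{A-inf} identifies with an inner automorphism of $\Omega(K)$.

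Your route via $E^\bullet$ forces you to invert a zig-zag (the comparison $\End_{\Omega(K)}(P_G)\sim E^\bullet(\Omega(K),k)$) already at the stage of producing $\mathrm{res}^\bullet$, so $\mathrm{res}^\bullet$ is only an $A_\infty$-map; and then the presentations $\rho^u$ you invoke from Corollary~\ref{cor: Yoneda presentation} are still expressed through the Hochschild side, so you would anyway have to thread $\Upsilon$ back in to compare $\bar\iota$ with $\iota$. Your sketch of the inner-automorphism step---``an $A_\infty$-homotopy exponentiates to a conjugation''---is the right intuition but is not the argument the paper uses and is delicate in characteristic $p$; the paper instead isolates the discrepancy as exactly $f'_K\circ g'_K$ and appeals to \cite[Thm.~6.2.3]{A-inf}, not to Cor.~6.2.6. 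In short: switching the intermediary from $E^\bullet$ to $C^\bullet$ turns your ``main obstacle'' into a one-line citation.
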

Because the diagram commutes up to inner automorphism, it induces the map of module categories required by sub-question (c) of Question \ref{ques: main}. 

The proof relies upon using Hochschild cohomology to produce the diagram above, and then applying the isomorphism $\Upsilon$ of Corollary \ref{cor: str transfer} at the end.

\begin{proof}
Choose two (independent) homotopy retracts as in Corollary \ref{cor: str transfer}, one for objects associated to $G$, and one for objects associated to $K$. This results in the objects enumerated there ($f, \Upsilon$, etc.), which we will now use with a subscript to indicate whether they are associated with $G$ or $K$ ($f_K, \Upsilon_K$, etc.). In addition, we require a left inverse $g_K$ of $f_K$, as in Proposition \ref{prop: f-inverse}. 

We link the objects associated to $G$ to those associated to $K$ by via the natural map of Hochschild cochains $C^\bullet(G, k) \ra C^\bullet(K,k)$ induced by restricting functions of $G^{\times i}$ to its subgroup $K^{\times i}$. There is a morphism of $A_\infty$-algebras $(H^\bullet(G,k), m_G') \ra H^\bullet(K,k), m_K')$ resulting from the composite 
\begin{equation}
\label{eq: G to K}
\eta_H: H^\bullet(G,k) \buildrel{f_G}\over\lra C^\bullet(G,k) \buildrel{\mathrm{restr.}}\over\lra C^\bullet(K,k) \buildrel{g_K}\over\lra H^\bullet(K,k). 
\end{equation}
Next, define $\eta : ((\Omega(G)^!)^\op, m_G^\op) \to ((\Omega(K)^!)^\op, m_K^\op)$ by $\eta := \Upsilon_K \circ \eta_H \circ \Upsilon^{-1}_G$. This $\eta$ is the opposite $A_\infty$-morphism to the desired morphism in the statement of the theorem. 

This morphism $\eta$ is compatible with the natural restriction map of quasi-compact module categories $\mathrm{Mod}(\Omega(G)) \ra \mathrm{Mod}(\Omega(K))$ because -- we claim -- \eqref{eq: c} commutes up to inner automorphism by the domain $\Omega(K)$. This claim of commutativity follows from the following facts, where we use the word ``corresponds'' to indicate an contravariant matching of maps between
\begin{itemize}
\item on one hand, $A_\infty$-algebras (which include dg-algebras), and 
\item other the other hand, maps among the Iwasawa algebras (which play the role of a dual to $C^\bullet(G,k)$) and classical hulls of the dual-bar constructions (which play the role of a dual to minimal $A_\infty$-algebras). 
\end{itemize}

The right-hand downward map in \eqref{eq: c} corresponds to $\eta$, while the $A_\infty$-quasi-isomorphisms
\[
f_G \circ \Upsilon^{-1}_G: (\Omega(G)^!)^\op \to C^\bullet(G,k), \quad f_K \circ \Upsilon^{-1}_K: (\Omega(K)^!)^\op \to C^\bullet(K,k)
\]
correspond to the presentations appearing as the horizontal pair of arrows in the theorem statement, via the formula of Theorem \ref{thm: A-inf main}.

Thus the clockwise map in \eqref{eq: c} corresponds to 
\[
f_K \circ \Upsilon_K^{-1} \circ \eta : (\Omega(G)^!)^\op \to C^\bullet(K,k),
\]
while the counter-clockwise map in \eqref{eq: c} corresponds to the composition of the leftmost two maps of \eqref{eq: G to K}, which we now denote by $t := (\mathrm{restr.}) \circ f_G \circ \Upsilon_G^{-1}$. Re-expressing the $A_\infty$-morphisms corresponding to the clockwise and counter-clockwise maps in terms of $t$, one observes that they correspond to $f_K \circ g_K \circ t$ and $t$, respectively. 

From \cite[Thm.\ 6.2.3]{A-inf} and the discussion following it, we know that $f_K \circ g_K$ corresponds to an inner automorphism of $\Omega(K)$, from which the theorem follows. 
\end{proof}

\section*{Acknowledgements}

The author would like to thank Claus Sorensen for his interest in \cite{A-inf} and for stimulating correspondence. The author also would like to thank Karol Koziol, Leonid Positselski, and Peter Schneider for helpful and clarifying correspondence about previous versions of this paper. The author was partially supported by Engineering and Physical Sciences Research Council grant EP/L025485/1 and partially supported by Simons Foundation award 846912.

\bibliographystyle{alpha}
\bibliography{CWEbib-2023-CIS}

\end{document}